\numberwithin{equation}{section}
\theoremstyle{plain}
\newtheorem{theorem}{Theorem}[section]
\newtheorem{lemma}[theorem]{Lemma}
\newtheorem{proposition}[theorem]{Proposition}
\newtheorem{corollary}[theorem]{Corollary}
\theoremstyle{definition}
\newtheorem{definition}{Definition}[section]
\newtheorem{assumption}{Assumption}[section]
\theoremstyle{remark}
\newtheorem{remark}{Remark}[section]
\newcommand{\norm}[1]{\left\|#1\right\|}
\newcommand{\abs}[1]{\left\vert#1\right\vert}
\newcommand{\spr}[1]{\left\langle\,#1\,\right\rangle}
\newcommand{\kl}[1]{\left(#1\right)}
\newcommand{\Kl}[1]{\left\{#1\right\}}
\newcommand{\supp}[1]{\text{supp}\left(#1\right)}
\definecolor{aog}{rgb}{0.0, 0.5, 0.0}
\newcommand{\R}{\mathbb{R}} 
\newcommand{\C}{\mathbb{C}}
\newcommand{\N}{\mathbb{N}}
\newcommand{\Z}{\mathbb{Z}}
\newcommand{\xd}{x^\dagger}
\newcommand{\xa}{x_\alpha}
\newcommand{\xad}{x_\alpha^\delta}
\newcommand{\yd}{y^{\delta}}
\newcommand{\OA}{{\Omega_A}}
\newcommand{\Ol}{{\Omega_l}}
\newcommand{\OT}{{\Omega_T}}
\newcommand{\OAaghl}{{\OA(\aghl)}}
\newcommand{\Lt}{{L_2}}
\newcommand{\LtOl}{{L_2(\Ol)}}
\newcommand{\LtOT}{{L_2(\OT)}}
\newcommand{\LtOA}{{L_2(\OA)}}
\newcommand{\ag}{\alpha_g}
\newcommand{\agx}{\alpha_g^x}
\newcommand{\agy}{\alpha_g^y}
\newcommand{\aghl}{\ag h_l}
\newcommand{\vphi}{\varphi}
\newcommand{\clg}{c_{l,g}}
\newcommand{\At}{\tilde{A}}
\newcommand{\et}{\tilde{e}}
\newcommand{\lt}{{\ell_2}}
\newcommand{\AD}{\mathcal{A}}
\newcommand{\ADb}{\bar{\mathcal{A}}}
\newcommand{\Ad}{A^\dagger}
\newcommand{\D}{\mathcal{D}}
\newcommand{\Ft}{\tilde{F}}
\newcommand{\fkt}{\tilde{f}_k}
\newcommand{\ekt}{\tilde{e}_k}
\newcommand{\sk}{\sigma_k}
\newcommand{\mukj}{\mu_{k,j}}
\newcommand{\rk}{{r_k}}
\newcommand{\vkl}{\boldsymbol{v}_{k,l}}
\newcommand{\ukl}{\boldsymbol{u}_{k,l}}
\newcommand{\vkj}{\boldsymbol{v}_{k,j}}
\newcommand{\ukj}{\boldsymbol{u}_{k,j}}
\newcommand{\ak}{\alpha_k}
\newcommand{\xb}{\bar{x}}
\newcommand{\ol}[1]{\overline{#1}}
\newcommand{\OD}{\Omega_D}
\newcommand{\OS}{\Omega_S}
\newcommand{\LtOo}{{L_2(-1,1)}}
\newcommand{\LtR}{{L_2(\R)}}
\newcommand{\LtT}{{L_2(0,2\pi)}}
\newcommand{\LtOD}{{L_2(\Omega_D)}}
\newcommand{\LtOS}{{L_2(\Omega_S)}}
\newcommand{\ejkl}{e_{j,k,l}}
\newcommand{\fjkl}{f_{j,k,l}}
\newcommand{\ejklt}{\tilde{e}_{j,k,l}}
\newcommand{\fjklt}{\tilde{f}_{j,k,l}}
\newcommand{\LtRN}{{L_2(\R^N)}}
\newcommand{\HaRN}{{H^\alpha(\R^N)}}
\newcommand{\HbOS}{{H^\beta(\OS)}}
\newcommand{\HapOS}{{H^{\alpha+1/2}(\OS)}}
\newcommand{\HazO}{{H^\alpha_0(\Omega)}}
\newcommand{\HazOo}{{H^\alpha_0(-1,1)}}
\newcommand{\HazOD}{{H^\alpha_0(\OD)}}
\newcommand{\HaR}{{H^\alpha(\R)}}
\newcommand{\HaOS}{{H^\alpha(\OS)}}
\newcommand{\ltN}{{\lt(\N)}}
\newcommand{\Lk}{\boldsymbol{\Lambda}_k}
\newcommand{\lk}{\lambda_k}
\newcommand{\lkc}{\overline{\lambda}_k}
\newcommand{\lkmn}{\lambda_k^{m,n}}
\newcommand{\Lkb}{\boldsymbol{\bar{\Lambda}}_k}
\newcommand{\mb}{{\bar{m}}}
\newcommand{\mcX}{\multirow{2}{*}{X}}
\title{Frame Decompositions of Bounded Linear Operators in Hilbert Spaces with Applications in Tomography}
\author{
Simon Hubmer\footnote{Johann Radon Institute Linz, Altenbergerstra{\ss}e 69, A-4040 Linz, Austria, (simon.hubmer@ricam.oeaw.ac.at), Corresponding author.} ,
Ronny Ramlau\footnote{Johannes Kepler University Linz, Institute of Industrial Mathematics, Altenbergerstra{\ss}e 69, A-4040 Linz, Austria, (ronny.ramlau@jku.at)} \footnote{Johann Radon Institute Linz, Altenbergerstra{\ss}e 69, A-4040 Linz, Austria, (ronny.ramlau@ricam.oeaw.ac.at)}
}
\begin{document}

\maketitle

\begin{abstract}
We consider the decomposition of bounded linear operators on Hilbert spaces in terms of functions forming frames. Similar to the singular-value decomposition, the resulting frame decompositions encode information on the structure and ill-posedness of the problem and can be used as the basis for the design and implementation of efficient numerical solution methods. In contrast to the singular-value decomposition, the presented frame decompositions can be derived explicitly for a wide class of operators, in particular for those satisfying a certain stability condition. In order to show the usefulness of this approach, we consider different examples from the field of tomography.

\smallskip
\noindent \textbf{Keywords.} Frame Decomposition, Singular-Value Decomposition, Inverse and Ill-Posed Problems, Computerized Tomography, Atmospheric Tomography
\end{abstract}


\section{Introduction}

In this paper, we consider bounded linear operators
	\begin{equation*}
		A \, : \, X \to Y \,,
	\end{equation*}
between real or complex Hilbert spaces $X$ and $Y$. These may be of the general form
	\begin{equation}\label{def_X_Y}
		X := \prod\limits_{m=1}^M X_m \,,
		\qquad \text{and} \qquad
		Y := \prod\limits_{n=1}^N Y_n \,,
	\end{equation}	
where $X_m$ and $Y_n$ are again Hilbert spaces, and $M,N \in \N$. In particular, we are interested in solving potentially ill-posed linear operator equations of the form
	\begin{equation}\label{Ax=y}
		A x = y  \,.
	\end{equation}
An element $x^* \in X$ is commonly called a \emph{least-squares solution} of \eqref{Ax=y} if
	\begin{equation*}
		\norm{A x^* - y} = \inf\Kl{\norm{Ax - y} \, \vert \, x \in X} \,,
	\end{equation*}	
and a \emph{solution} if $Ax^*=y$. Furthermore, the \emph{minimum-norm (least-squares) solution} $\xd$ is defined as the unique (least-squares) solution of minimal norm, i.e.,
	\begin{equation*}
		\norm{\xd} = \inf\Kl{\norm{x^*} \, \vert \, x^* \text{ is a (least-squares) solution of } Ax = y}\,.
	\end{equation*}		
In case that $A$ is compact, it is well-known \cite{Conway_1994,Heuser_1981,Alt_2016,Engl_1997} that there exists a \emph{singular system} $(\sigma_k, v_k, u_k)_{k=1}^\infty$ such that $A$ admits a \emph{singular-value decomposition} (SVD) of the form
	\begin{equation}\label{def_A_dec}
		A x  = \sum_{k=1}^{\infty} \sigma_k \spr{x,v_k} u_k \,.
	\end{equation}
Thereby, the \emph{singular values} $\sigma_n$ and \emph{singular functions} $u_k$, $v_k$ are defined via
	\begin{equation}\label{def_singular_system}
		A^*A\, v_k = \sigma_k^2 \,v_k \,, \qquad u_k := (1/\sigma_k) A v_k \,.
	\end{equation} 
The singular system contains all relevant information about the operator $A$ and can be used to characterize the minimum-norm (least-squares) solution $\xd$ of \eqref{Ax=y} via
	\begin{equation}\label{def_Ad}
		\xd := A^\dagger y:= \sum_{k=1}^{\infty} \frac{\spr{y,u_k}}{\sigma_k} v_k \,.
	\end{equation}
Since $\{u_k\}_{k\in\N}$ and $\{v_k \}_{k \in \N}$ are complete orthonormal systems spanning $\ol{R(A)}$ and $N(A)^\perp$, respectively, it follows that $\xd$ is well-defined if and only if the so-called \emph{Picard condition} holds:
	\begin{equation}\label{Picard}
		\sum_{k=1}^{\infty} \frac{\abs{\spr{y,u_k}}^2}{\sigma_k^2} < \infty \,.
	\end{equation}
The rate of decay of the singular values $\sigma_k$ thus defines the degree of ill-posedness of the problem. Furthermore, in the presence of noisy data $\yd$, one can determine a stable approximation $\xad$ of $\xd$ for example via (see e.g.~\cite{Engl_Hanke_Neubauer_1996,Louis_1989})
	\begin{equation}\label{def_xad}
		\xad := \sum_{k=1}^{\infty} g_\alpha(\sigma_k) \spr{\yd,u_k}v_k \,,
	\end{equation}
where $g_\alpha$ is a properly selected approximation of $s \mapsto 1/s$ and the \emph{regularization parameter} $\alpha$ is suitably chosen in dependence on the \emph{noise level} $\delta$, which is such that
	\begin{equation*}
		\norm{y - \yd} \leq \delta \,.
	\end{equation*}
Different choices of $g_\alpha$ give rise to different regularization methods, for example \emph{Tikhonov regularization}, \emph{Landweber iteration}, or the \emph{truncated singular-value expansion}\cite{Engl_Hanke_Neubauer_1996,Louis_1989}.

Hence, if the singular system of $A$ is known or can be easily derived, it makes sense to use it for both analytical considerations as well as numerical implementations. Unfortunately, in many cases an explicit form of the singular system is unknown or hard to derive. The reasons for this are two-fold. On the one hand, finding an explicit representation of the solutions of the eigenvalue equation in \eqref{def_singular_system} is often very difficult. And even if found, the numerical computation might be impossible (see e.g.~\cite{Ramlau_Koutschan_Hofmann_2020}). On the other hand, the domains over which the Hilbert spaces $X$ and $Y$ are defined are often not regular, which makes it difficult to find orthonormal bases which can serve as candidates or building blocks for the singular functions $u_k$ and $v_k$. Thus, while in theory the singular-value decomposition is a great tool for the analysis and solution of inverse problems, it is often not used in practice, save perhaps for finite-dimensional problems, for which the numerical computation/approximation of the then singular vectors quickly becomes overwhelming when the problem is medium- to large-scale.

Hence, in this paper, we generalize and extend the applicability of the singular-value decomposition by considering \emph{frame decompositions} of the operator $A$. By this, we mean that we decompose the operator in a similar way as in \eqref{def_A_dec}, but now with the sets of functions $\{v_k\}_{k\in\N}$ and $\{u_k\}_{k\in\N}$ no longer being orthonormal systems, but only forming (suitably connected) frames over $X$ and $Y$, respectively. Due to the properties of frames we are then able to characterize the minimum-coefficient (least-squares) solution of \eqref{Ax=y} in a way similar to \eqref{def_Ad}. The advantage of this approach is the greater freedom gained by the use of frames over orthonormal systems, with which it is also easier to work over irregular domains. This freedom, combined with the availability of a wide number of highly specialized frames, allows us to obtain frame decompositions of operators for which singular-value decompositions are unavailable so far. These include in particular those operators which satisfy a certain stability property (see \eqref{cond_helper_1} below), as well as continuously invertible linear operators. Furthermore, in order to show that frame decompositions are not only a theoretical possibility, we also present a number of explicit examples from the field of tomography.

Note that frames have been used for the solution of inverse problems before, most commonly in the form of bi-orthogonal or orthonormal wavelet frames (see for example \cite{Chaux_Combettes_Pesquet_Wajs_2007,Teschke_2005,Teschke_2007,Candes_Donoho_2002,Dahmen_1997,Stevenson_2003,Neira_Rubio_Fern_Plastino_1997,Ram_Cohen_Elad_2014, Zhariy_2009,Donoho_1995,Abramovich_Silverman_1998,Kudryavtsev_Shestakov_2019} and the references therein). The popularity of wavelets is due to the fact that they can naturally be used to express sparsity properties, structural aspects, and smoothness assumptions on the sought for solutions. In this connection, we in particular want to mention the Wavelet-Vaguelette Decomposition \cite{Donoho_1995,Abramovich_Silverman_1998,Kudryavtsev_Shestakov_2019}, which is another generalization of the SVD based not on general frames, but specifically on an orthonormal wavelet basis and two bi-ortogonal ``near-orthogonal sets'' (i.e.\ frames).  
	
The outline of this paper is as follows: In Section~\ref{sect_Frames}, we review some necessary material on frames in Hilbert spaces, which we then use in Section~\ref{sect_frame_dec} to derive our main results on frame decompositions. In Section~\ref{sect_discussion}, we show the applicability of the developed theory to specific operator classes, providing a general recipe for their frame decomposition. In Section~\ref{sect_numerical_results}, we apply our results to a number of problems in atmospheric and computerized tomography, before ending with a short conclusion in Section~\ref{sect_conclusion}.

\section{Frames in Hilbert Spaces}\label{sect_Frames}

Before considering frame decompositions of the operator $A$, we first need to recall some basic facts on frames in Hilbert spaces. This short summary, based on the seminal work \cite{Daubechies_1992}, is adapted from our previous publication \cite{Hubmer_Ramlau_2020}. First, recall the following

\begin{definition}\label{def_frame}
A sequence $\{e_k\}_{k \in \N}$ in a Hilbert space $X$ is called a frame over $X$, if and only if there exist numbers $B_1,B_2 > 0$ such that for all $x \in X$ there holds
	\begin{equation}\label{eq_frame}
		B_1 \norm{x}_X^2 \leq \sum\limits_{k=1}^\infty \abs{\spr{x,e_k}_X}^2 \leq B_2 \norm{x}_X^2 \,.
	\end{equation}
The numbers $B_1,B_2$ are called \emph{frame bounds}. The frame is called \emph{tight} if $B_1 = B_2$.
\end{definition}

For a given frame $\{e_k\}_{k\in \N}$, one can consider the so-called \emph{frame (analysis) operator} $F$ and its adjoint \emph{(synthesis)} operator $F^*$, which are given by
	\begin{equation}\label{def_F_Fad}
	\begin{split}
		&F \, : \, X \to \lt(\N) \,, \qquad
		x \mapsto \Kl{\spr{x,e_k}}_{k \in \N} \,,
		\\ 
		&F^* \, : \, \lt(\N) \to X \,, \qquad
		\Kl{a_k}_{k\in\N} \mapsto \sum\limits_{k=1}^\infty a_k e_k  \,.
	\end{split}	
	\end{equation}
Due to \eqref{eq_frame} and the general fact that $\norm{F} = \norm{F^*} $, there holds
	\begin{equation}\label{eq_bound_F_Fadj}
		\sqrt{B_1} \leq \norm{F} = \norm{F^*} \leq \sqrt{B_2} \,.
	\end{equation}
Furthermore, one can define the operator $S := F^*F$, i.e.,
	\begin{equation*}
		S x := \sum\limits_{k=1}^\infty \spr{x,e_k} e_k \,,
	\end{equation*}
and it follows that $S$ is a bounded linear operator with $B_1 I \leq S \leq B_2 I$, where $I$ denotes the identity operator. Furthermore, $S$ is invertible and $B_2^{-1} I \leq S^{-1} \leq B_1^{-1} I$. Thus, it follows that if one defines $\et_k := S^{-1}e_k$, then the set $\{\et_k\}_{k\in \N}$ also forms a frame over $X$, with frame bounds $B_2^{-1},B_1^{-1}$, which is called the \emph{dual frame} of $\{e_k\}_{k \in \N}$. For the corresponding frame operator
	\begin{equation*}
		\Ft \, : \, X \to \lt(\N) \,, \qquad
		x \mapsto \Kl{\spr{x,\ekt}}_{k \in \N} \,,
	\end{equation*}	
there holds
	\begin{equation}\label{eq_frame_ops}
		\Ft^*F = F^* \Ft = I \,,
		\qquad
		\text{and}
		\qquad
		\Ft  F^* = F \Ft^* = P \,,
	\end{equation}
where $P$ denotes the orthogonal projector from $\lt(\N)$ onto $R(F) = R(\Ft)$. In particular, it follows from \eqref{eq_frame_ops} that any $x \in X$ can be written in the form
	\begin{equation}\label{eq_frame_rec}
		x = \sum\limits_{k=1}^\infty \spr{x, \et_k} e_k
		= \sum\limits_{k=1}^\infty \spr{x, e_k} \et_k \,.
	\end{equation}
Furthermore, if $\{e_k\}_{k \in \N}$ is a tight frame with frame bounds $B_1 = B_2 = B$, then we have that $S^{-1} = B^{-1} I$, which implies that $\et_k = e_k/B$ and therefore,
	\begin{equation}\label{eq_frame_rec_tight}
		x = \frac{1}{B} \sum\limits_{k = 1}^\infty \spr{x, e_k} e_k\,,
		\qquad \text{and} \qquad
		\norm{x}^2 = \frac{1}{B} \sum\limits_{k=1}^\infty \abs{\spr{x,e_k}}^2 \,.
	\end{equation}
Since in general there holds $\Kl{0} \subset \,  N(F^*) = N(\Ft^*)$, the decomposition of $x$ given in \eqref{eq_frame_rec} is not unique. However, it is the most \emph{economical one} in the sense of the following
\begin{proposition}[{\hspace{-1pt}\cite[Prop.~3.2.4]{Daubechies_1992}}]\label{prop_frame_min}
	If $x = \sum\limits_{k=1}^\infty a_k e_k$ for some sequence $\{a_k\}_{k\in \N} \in \lt(\N)$ and if $a_k\neq\spr{x,\et_k}$ for some $k \in \N$, then
	\begin{equation*}
	\sum\limits_{k=1}^\infty \abs{a_k}^2 > \sum\limits_{k = 1} ^\infty \abs{\spr{x,\et_k}}^2 \,.
	\end{equation*}
\end{proposition}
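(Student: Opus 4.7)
The plan is to show that the canonical coefficient sequence $\{\spr{x,\et_k}\}_{k\in\N}$ is the orthogonal projection (in $\lt(\N)$) of any admissible coefficient sequence $\{a_k\}_{k\in\N}$ onto $R(\Ft)$, so that the claim reduces to the Pythagorean theorem. Concretely, I would first rewrite both representations in operator form: the hypothesis $x = \sum_{k} a_k e_k$ says $x = F^* a$ for the sequence $a := \{a_k\}_{k\in\N}$, while the canonical expansion \eqref{eq_frame_rec} says $x = F^* \Ft x$ with $\Ft x = \{\spr{x,\et_k}\}_{k\in\N}$.

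Next I would take the difference and use the identities \eqref{eq_frame_ops}. From $F^*a = F^*\Ft x$, one sees that $a - \Ft x \in N(F^*)$. On the other hand, since $\Ft F^* = P$ is the orthogonal projector onto $R(F) = R(\Ft)$, the complementary projector $I - P$ projects onto $N(F^*) = R(F)^\perp = R(\Ft)^\perp$. In particular, the sequence $\Ft x$ lies in $R(\Ft)$ while $a - \Ft x$ lies in its orthogonal complement, so the decomposition
\begin{equation*}
    a = \Ft x + (a - \Ft x)
\end{equation*}
is orthogonal in $\lt(\N)$.

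Applying the Pythagorean theorem then yields
\begin{equation*}
    \sum_{k=1}^\infty \abs{a_k}^2 = \norm{a}_{\lt(\N)}^2 = \norm{\Ft x}_{\lt(\N)}^2 + \norm{a - \Ft x}_{\lt(\N)}^2 = \sum_{k=1}^\infty \abs{\spr{x,\et_k}}^2 + \sum_{k=1}^\infty \abs{a_k - \spr{x,\et_k}}^2 \,.
\end{equation*}
The assumption that $a_k \neq \spr{x,\et_k}$ for some $k$ ensures that the last sum is strictly positive, giving the claimed strict inequality.

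I do not anticipate any serious obstacle: the only subtlety is the identification $N(F^*) = R(\Ft)^\perp$, but this is immediate from the fact that $\Ft F^* = P$ and $F \Ft^* = P$ are the same self-adjoint projector in \eqref{eq_frame_ops}, so $R(F) = R(\Ft) = R(P)$ and taking orthogonal complements yields $N(F^*) = N(\Ft^*) = R(P)^\perp$. Everything else is a bookkeeping application of the identities already established in the section.
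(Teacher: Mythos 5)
Your argument is correct, and it is essentially the standard proof of this result: the paper itself gives no proof but cites Daubechies \cite[Prop.~3.2.4]{Daubechies_1992}, whose argument is exactly the orthogonal decomposition $a = \Ft x + (a - \Ft x)$ with $a - \Ft x \in N(F^*) = R(F)^\perp = R(\Ft)^\perp$ followed by Pythagoras. All the identities you invoke ($x = F^*a$, $F^*\Ft = I$, $R(F)=R(\Ft)$ closed with $N(F^*)$ its orthogonal complement) are available from \eqref{eq_frame_ops} and the surrounding discussion, so there is no gap.
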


This proposition implies that among all possible decompositions of a function $x$ in terms of the frame $\Kl{e_k}_{k\in\N}$, the coefficients $\spr{x,\ekt}$ in \eqref{eq_frame_rec} are those with the smallest $\ell_2$-norm. Thus, working with these coefficients also has the practical advantage of reducing the risk of computational instabilities and the resulting errors in an implementation.

The fact that frames generally allow the decomposition of a function in potentially infinitely many different ways is one of the key differences between frames and bases. In fact, for any frame $\Kl{e_k}_{k\in\N}$ the following statements are equivalent (see e.g.~\cite{Christensen_2016}):
	\begin{equation}\label{eq_frame_equivalences}
	\begin{split}
		N(F^*) = 0 
		\quad &\Leftrightarrow \quad
		\Kl{e_k}_{k\in\N} \text{ is a (Riesz) basis}
		\\
		\quad &\Leftrightarrow \quad
		\Kl{e_k}_{k\in\N} \text{ and } \Kl{\ekt}_{k\in\N} \text{ are biorthogonal}
		\\
		\quad &\Leftrightarrow \quad
		\Kl{e_k}_{k\in\N} \text{ is exact (i.e.\ no element can be deleted)} \,.
	\end{split}
	\end{equation}
In particular, since $R(F) = R(\Ft)$ is a closed subspace of $\lt(\N)$ and thus there holds $\lt(\N) = R(F) \oplus N(F^*)$, it follows that $P=I$ if and only if $\Kl{e_k}_{k\in\N}$ is an exact frame.

Note that it is sometimes not possible to compute the dual frame function $\et_k$ explicitly. However, since there holds (see \cite{Daubechies_1992}) that
	\begin{equation}\label{eq_dual}
		\et_k = \frac{2}{B_1+B_2} \sum\limits_{j=0}^{\infty} R^j e_j \,,
	\end{equation}
where $R := I - \frac{2}{B_1+B_2} S$, the elements of the dual frame can be approximated by only summing up to a finite index $N$, i.e.,
	\begin{equation}\label{eq_dual_N}
		\et_k^N = \frac{2}{B_1+B_2} \sum\limits_{j=0}^{N} R^j e_j \,.
	\end{equation}
The induced error of this approximation is controlled by the frame bounds $B_1,B_2$, i.e.,
	\begin{equation}\label{eq_dual_approx_error}
		\norm{x - \sum\limits_{k=1}^\infty \spr{x,e_k}\et_k^N} \leq \kl{\frac{B_2-B_1}{B_2+B_1}}^{N+1}\norm{x}\,.
	\end{equation}
Note that \eqref{eq_dual_N} can also be written in the recursive form
	\begin{equation}\label{eq_dual_N_approx}
		\et_k^N = \frac{2}{B_1+B_2}e_k + R \et_k ^{N-1} \,,
	\end{equation}
which allows for an efficient numerical implementation.

\section{Frame Decomposition}\label{sect_frame_dec}

In this section, we derive our main results on the frame decomposition of bounded linear operators $A \, : \, X \to Y$, where $X$ and $Y$ are real or complex Hilbert spaces of the general form \eqref{def_X_Y} for some $M,N\in\N$, i.e., 
	\begin{equation}\label{def_A}
	\begin{split}
		A \, : \, X = \prod\limits_{m=1}^M X_m 
		\, &\to \,,
		Y = \prod\limits_{n=1}^N Y_n 
		\\
		x = (x_m)_{m=1}^M \, &\mapsto \, Ax = (A_n x)_{n=1}^N \,,
	\end{split}
	\end{equation}
where the operators $A_n : X \to Y_n$ simply denote the components of the operator $A$. On the spaces $X$ and $Y$, we consider the canonic inner products 
	\begin{equation*}
		\spr{x,z}_X := \sum_{m=1}^{M} \spr{x_m,z_m}_{X_m} \,,
		\qquad 
		\spr{w,y}_Y := \sum_{n=1}^{N} \spr{w_n,y_n}_{Y_n} \,,
	\end{equation*}
where $\spr{\cdot,\cdot}_{X_m}$ and $\spr{\cdot,\cdot}_{Y_n}$ denote the inner products of $X_m$ and $Y_n$, respectively. 

For each of the spaces $X_m$ and $Y_n$, we consider frames $\{e_k^m\}_{k\in\N}$ and $\{f_k^n\}_{k\in\N}$ with frame bounds $B_1, B_2$ and $C_1,C_2$, respectively, i.e., for all $x_m \in X_m$ there holds \\
	\begin{equation}\label{eq_frame_ek}
		B_1 \norm{x_m}_{X_m}^2 \leq \sum\limits_{k=1}^\infty \abs{\spr{x_m,e_k^m}_{X_m}}^2 \leq B_2 \norm{x_m}_{X_m}^2 \,, 
	\end{equation}
and for all $y_n \in Y_n$ there holds
	\begin{equation}\label{eq_frame_fk}
		C_1 \norm{y_n}_{Y_n}^2 \leq \sum\limits_{k=1}^\infty \abs{\spr{y_n,f_k^n}_{Y_n}}^2 \leq C_2 \norm{y_n}_{Y_n}^2 \,.
	\end{equation}
For our analysis, these frames have to be suitably connected to the operator $A$, which leads us to 
\begin{assumption}\label{ass_main}
There exists a sequence $\{\Lk \}_{k\in\N}$ of complex-valued $N\times M$ matrices, i.e., $\Lk = (\lkmn)_{m,n=1}^{M,N}$ with $\lkmn \in \C$, such that
	\begin{equation}\label{cond_frames}
		\kl{\spr{A_n x,f_k^n}_{Y_n}}_{n=1}^N = \Lk \cdot \kl{\spr{x_m,e_k^m}_{X_m}}_{m=1}^M \,,
		\qquad 
		\forall \,k \in \N \,, \,  x = (x_m)_{m=1}^M \in X \,,
	\end{equation}
where as in \eqref{def_A} the operators $A_n : X \to Y_n$ denote the components of the operator $A$.
\end{assumption}

The implications of this assumption and the question of how to choose suitable frames $\Kl{e_k^m}_{k\in \N}$ and $\Kl{f_k^n}_{k\in\N}$ are discussed in detail in Section~\ref{sect_discussion} below. Assumption~\ref{ass_main} immediately leads to

\begin{lemma}\label{lemma_An}
Let $A$ be as in \eqref{def_A} and let Assumption~\ref{ass_main} hold. Then there holds
	\begin{equation}\label{eq_lemma_An_helper}
		\spr{A_n x, f_k^n}_{Y_n} = \sum_{m=1}^M  \lkmn \spr{x_m,e_k^m}_{X_m} \,,
		\qquad \forall \, x = (x_m)_{m=1}^M \in X \,.
	\end{equation} 
\end{lemma}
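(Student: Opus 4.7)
The plan is to simply read off the $n$-th component of the vector identity \eqref{cond_frames} from Assumption~\ref{ass_main}. The left-hand side of \eqref{cond_frames} is, by definition, the column vector in $\C^N$ whose $n$-th entry is $\spr{A_n x,f_k^n}_{Y_n}$, and the right-hand side is the product of the $N \times M$ matrix $\Lk = (\lkmn)$ with the column vector in $\C^M$ whose $m$-th entry is $\spr{x_m,e_k^m}_{X_m}$.

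By the definition of matrix-vector multiplication, the $n$-th component of this product equals $\sum_{m=1}^M \lkmn \spr{x_m,e_k^m}_{X_m}$. Equating the $n$-th components of the two sides of \eqref{cond_frames} then immediately yields the scalar identity \eqref{eq_lemma_An_helper} claimed in the lemma. There is essentially no obstacle, since the statement is just a componentwise reformulation of Assumption~\ref{ass_main}; the only small care needed is to respect the slightly unconventional indexing convention $(\lkmn)_{m,n=1}^{M,N}$ with $m$ the column index and $n$ the row index, which is consistent with the $N \times M$ shape declared in the assumption.

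The reason for isolating the statement as a lemma despite its essentially tautological character is that subsequent derivations in Section~\ref{sect_frame_dec}, in particular the construction of the frame decomposition of $A$ and the characterization of the minimum-coefficient least-squares solution of \eqref{Ax=y}, are much more conveniently carried out using the explicit scalar identity \eqref{eq_lemma_An_helper} than using the abstract matrix-vector formulation of Assumption~\ref{ass_main}.
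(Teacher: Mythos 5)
Your proposal is correct and matches the paper's (implicit) reasoning: the paper states Lemma~\ref{lemma_An} as an immediate consequence of Assumption~\ref{ass_main} without a written proof, precisely because it is just the componentwise reading of the matrix--vector identity \eqref{cond_frames}, exactly as you argue.
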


Next, we want to derive a decomposition of $A$ in terms of the functions $e_k^m$ and $f_k^n$. For this, we need to consider the dual frames $\{\ekt^m\}_{k\in\N}$ and $\{\fkt^n\}_{k\in\N}$ of the frames  $\{e_k^m\}_{k\in\N}$ and $\{f_k^n\}_{k\in\N}$, respectively. It follows from Section~\ref{sect_Frames} together with \eqref{eq_frame_ek} and \eqref{eq_frame_fk} that these dual frames are again frames, but now with frame bounds $1/B_2,1/B_1$ and $1/C_2, 1/C_1$, respectively. In particular, this means that any function $x_m \in X_m$ and $y_n \in Y_n$ can be written in the form
	\begin{equation}\label{eq_dec_x_y}
		x_m = \sum_{k=1}^\infty \spr{x_m,e_k^m}_{X_m} \ekt^m \,,
		\qquad
		\text{and}
		\qquad
		y_n = \sum_{k=1}^\infty \spr{y_n,f_k^n}_{Y_n} \fkt^n \,,
	\end{equation}
respectively, and that these decompositions are the most economical ones in the sense of Proposition~\ref{prop_frame_min}. Using this, we now obtain the following decomposition result:

\begin{proposition}\label{prop_dec_A}
Let $A$ be as in \eqref{def_A} and let Assumption~\ref{ass_main} hold. Then there holds
	\begin{equation}\label{eq_A_dec}
		A x = \sum_{k=1}^\infty  \kl{\sum_{m=1}^M \lkmn \spr{x_m,e_k^m}_{X_m} \fkt^n }_{n=1}^N  \,,
		\qquad \forall \, x = (x_m)_{m=1}^M \in X \,.
	\end{equation}
\end{proposition}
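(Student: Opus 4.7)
The plan is to prove the decomposition componentwise, reducing it to the frame reconstruction formula \eqref{eq_dec_x_y} applied to each $A_n x \in Y_n$, and then substituting the identity from Lemma~\ref{lemma_An} for the frame coefficients.

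First I would observe that by the definition of $A$ in \eqref{def_A}, it suffices to show that for every $n \in \{1,\dots,N\}$ and every $x = (x_m)_{m=1}^M \in X$ there holds
\begin{equation*}
    A_n x = \sum_{k=1}^\infty \left( \sum_{m=1}^M \lkmn \spr{x_m,e_k^m}_{X_m} \right) \fkt^n \,,
\end{equation*}
since the product structure of $Y$ means that the $n$-th component of the right-hand side of \eqref{eq_A_dec} must coincide with $A_n x$.

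Next, since $A_n x \in Y_n$ and $\{\fkt^n\}_{k\in\N}$ is the dual frame of $\{f_k^n\}_{k\in\N}$ with frame bounds $1/C_2, 1/C_1$, the reconstruction identity in \eqref{eq_dec_x_y} (applied with $y_n := A_n x$) yields
\begin{equation*}
    A_n x = \sum_{k=1}^\infty \spr{A_n x, f_k^n}_{Y_n} \, \fkt^n \,,
\end{equation*}
where the series converges in the norm of $Y_n$. Substituting the identity from Lemma~\ref{lemma_An}, i.e.\ $\spr{A_n x, f_k^n}_{Y_n} = \sum_{m=1}^M \lkmn \spr{x_m,e_k^m}_{X_m}$, into the coefficients then gives the required componentwise formula.

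Finally, assembling the $N$ components into a single element of $Y$ yields \eqref{eq_A_dec}. I do not expect a real obstacle here: the only point that might deserve a sentence is the justification of convergence of the inner (finite) sum and the outer (frame) sum, but since the inner sum is over the finite index set $m = 1,\dots,M$ and the outer series converges in $Y_n$ by the frame property, no additional care is needed. The nontrivial content is entirely carried by Assumption~\ref{ass_main} via Lemma~\ref{lemma_An}; the proposition itself is a direct combination of this identity with the standard frame reconstruction formula.
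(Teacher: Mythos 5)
Your proposal is correct and follows exactly the paper's own argument: apply the dual-frame reconstruction formula \eqref{eq_dec_x_y} to each component $A_n x \in Y_n$ and substitute the coefficient identity from Lemma~\ref{lemma_An}. No gaps; the convergence remarks you add are fine but not needed beyond what the frame property already gives.
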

\begin{proof}
Since $\{f_k^n\}_{k\in\N}$ forms a frame over $Y_n$, it follows from \eqref{eq_dec_x_y} that for all $x \in X$,
	\begin{equation*}
		A_n x = \sum_{k=1}^\infty \spr{A_n x,f_k^n}_{Y_n} \fkt^n \,,
	\end{equation*}
Together with the definition of $A$ and Lemma~\ref{lemma_An}, this implies
	\begin{equation*}
		Ax = (A_n x)_{n=1}^N = \kl{ \sum_{k=1}^\infty \spr{A_n x,f_k^n}_{Y_n} \fkt^n }_{n=1}^N
		\overset{\eqref{eq_lemma_An_helper}}{=}
		\kl{ \sum_{k=1}^\infty \sum_{m=1}^M \lkmn \spr{x_m,e_k^m}_{X_m} \fkt^n }_{n=1}^N \,,
	\end{equation*}
which yields the assertion.
\end{proof}

Using the decomposition of $A$ derived above, we are now turning our attention to the task of obtaining a suitable solution of the operator equation \eqref{Ax=y}. For this, we first need to derive the following auxiliary result:
\begin{lemma}\label{lem_residual}
Let $A$ be as in \eqref{def_A} and let Assumption~\ref{ass_main} hold. Then for all functions $x = (x_m)_{m=1}^M \in X$ and $y = (y_n)_{n=1}^N \in Y$ there holds
	\begin{equation}\label{eq_residual}
		C_1 \norm{Ax - y}_Y^2 
		\leq \ \sum_{k=1}^\infty
		\norm{ \Lk \cdot \kl{\spr{x_m,e_k^m}_{X_m}}_{m=1}^M - \kl{ \spr{y_n,f_k^n}_{Y_n}}_{n=1}^N }_{\R^N}^2  
		\leq 
		C_2 \norm{Ax - y}_Y^2 \,.
	\end{equation}
where $C_1$ and $C_2$ denote the frame bounds of $\Kl{f_k^n}_{k\in\N}$. 
\end{lemma}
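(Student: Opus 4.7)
The plan is to rewrite the middle expression in \eqref{eq_residual} so that each squared $\R^N$-norm becomes the sum, over $n=1,\dots,N$, of the squared moduli of the residual components tested against the frame elements $f_k^n$; once this is done, the two inequalities will follow directly from the frame property \eqref{eq_frame_fk} applied componentwise and then summed.

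More concretely, I would first introduce the residual $r := Ax - y \in Y$, so that $r = (r_n)_{n=1}^N$ with $r_n = A_n x - y_n \in Y_n$. By linearity of the inner product on $Y_n$ and by Lemma~\ref{lemma_An} applied to $A_n x$, one gets
\begin{equation*}
	\spr{r_n, f_k^n}_{Y_n} = \spr{A_n x, f_k^n}_{Y_n} - \spr{y_n, f_k^n}_{Y_n} = \sum_{m=1}^M \lkmn \spr{x_m,e_k^m}_{X_m} - \spr{y_n,f_k^n}_{Y_n} \,,
\end{equation*}
which is exactly the $n$-th component of the vector inside the $\R^N$-norm in \eqref{eq_residual}. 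Hence the middle term becomes
\begin{equation*}
	\sum_{k=1}^\infty \sum_{n=1}^N \abs{\spr{r_n,f_k^n}_{Y_n}}^2 \,,
\end{equation*}
and since all summands are nonnegative, Tonelli/Fubini lets me exchange the order of summation to obtain $\sum_{n=1}^N \sum_{k=1}^\infty \abs{\spr{r_n,f_k^n}_{Y_n}}^2$.

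At this point the frame inequality \eqref{eq_frame_fk} applied to $r_n \in Y_n$ yields, for each $n$,
\begin{equation*}
	C_1 \norm{r_n}_{Y_n}^2 \leq \sum_{k=1}^\infty \abs{\spr{r_n,f_k^n}_{Y_n}}^2 \leq C_2 \norm{r_n}_{Y_n}^2 \,,
\end{equation*}
and summing over $n=1,\dots,N$ together with the identity $\norm{r}_Y^2 = \sum_{n=1}^N \norm{r_n}_{Y_n}^2$ (which follows from the canonical product inner product on $Y$) gives the asserted two-sided bound. I do not expect any genuine obstacle here: the result is essentially the observation that Assumption~\ref{ass_main} makes the components of $\Lk\cdot(\spr{x_m,e_k^m})_m - (\spr{y_n,f_k^n})_n$ coincide with the frame coefficients of the residual, after which the frame inequality on $Y_n$ does all the work. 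The only minor care needed is keeping track of the sum–swap and ensuring the product-space norm identity is used correctly.
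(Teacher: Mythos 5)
Your proof is correct and follows essentially the same route as the paper: both arguments use Lemma~\ref{lemma_An} to identify the components of $\Lk \cdot \kl{\spr{x_m,e_k^m}_{X_m}}_{m=1}^M - \kl{\spr{y_n,f_k^n}_{Y_n}}_{n=1}^N$ with the frame coefficients $\spr{A_n x - y_n, f_k^n}_{Y_n}$ of the residual, and then apply the frame inequality \eqref{eq_frame_fk} on each $Y_n$ together with the product-space norm identity. The only difference is the order in which the two steps are presented, which is immaterial.
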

\begin{proof}
Let $x = (x_m)_{m=1}^M \in X$ and $y = (y_n)_{n=1}^N \in Y$ be arbitrary but fixed. Since the sets $\{f_k^n\}_{k\in\N}$ form frames over $Y_n$ with frame bounds $C_1, C_2$, it follows from \eqref{eq_frame_fk} that 
	\begin{equation*}
	\begin{split}
		C_1 \norm{Ax-y}_Y^2 
		\leq 
		\sum\limits_{n=1}^N \sum\limits_{k=1}^\infty \abs{\spr{A_n x-y_n,f_k^n}_{Y_n}}^2 
		\leq 
		C_2 \norm{Ax-y}_Y^2 \,.	
	\end{split}
	\end{equation*}
Together with the fact that Lemma~\ref{lemma_An} and $\Lk = (\lkmn)_{m,n=1}^{M,N}$ imply 
	\begin{equation*}
	\begin{split}
		\sum_{k=1}^\infty \abs{\spr{A_nx - y_n, f_k^n}_{Y_n}}^2
		&=
		\sum_{k=1}^\infty \sum_{n=1}^N \abs{\sum_{m=1}^M \lkmn \spr{x_m,e_k^m}_{X_m}  -   \spr{y_n, f_k^n}_Y}^2 
		\\
		&= \sum_{k=1}^\infty
		\norm{ \Lk \cdot \kl{\spr{x_m,e_k^m}_{X_m}}_{m=1}^M - \kl{ \spr{y_n,f_k^n}_{Y_n}}_{n=1}^N }_{\R^N}^2
		\,,
	\end{split}
	\end{equation*}	
this yields the assertion.
\end{proof}

The above result has many useful consequences, such as the following	
\begin{corollary}\label{corollary_equivalences}
Let $A$ be as in \eqref{def_A} and let Assumption~\ref{ass_main} hold. Then for any function $x = (x_m)_{m=1}^M \in X$ there holds
	\begin{equation}\label{equivalence_nullspace}
		x \in N(A) 
		\qquad
		\Longleftrightarrow
		\qquad
		\forall \, k \in \N  : \quad 
		\kl{\spr{x_m,e_k^m}_{X_m}}_{m=1}^M \in N(\Lk) \,.
	\end{equation}
Furthermore, for any $y = (y_n)_{n=1}^N \in Y$ there holds
	\begin{equation}\label{equivalence_solution}
		A x = y
		\qquad
		\Longleftrightarrow
		\qquad
		\forall \, k \in \N  : \quad 
		\Lk \cdot \kl{\spr{x_m,e_k^m}_{X_m}}_{m=1}^M = \kl{ \spr{y_n,f_k^n}_{Y_n}}_{n=1}^N \,.
	\end{equation}	
\end{corollary}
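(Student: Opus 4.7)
The plan is to derive both equivalences directly from Lemma~\ref{lem_residual}, which will do essentially all the work, and to treat the nullspace characterization \eqref{equivalence_nullspace} as the special case $y=0$ of the solution characterization \eqref{equivalence_solution}. The key observation is that the lemma sandwiches $\norm{Ax-y}_Y^2$ between positive multiples of the nonnegative series
\begin{equation*}
S(x,y) := \sum_{k=1}^\infty \norm{\Lk \cdot \kl{\spr{x_m,e_k^m}_{X_m}}_{m=1}^M - \kl{\spr{y_n,f_k^n}_{Y_n}}_{n=1}^N}_{\R^N}^2 \,,
\end{equation*}
so that the vanishing of $\norm{Ax-y}_Y$ is equivalent to the vanishing of $S(x,y)$, which, since its summands are nonnegative real numbers, is in turn equivalent to every single summand being zero.

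For \eqref{equivalence_solution}, I would argue as follows. If $Ax=y$, then $\norm{Ax-y}_Y=0$, and the upper bound in \eqref{eq_residual} forces $S(x,y)=0$; since each summand is nonnegative, each must vanish, which is exactly the condition on the right-hand side of \eqref{equivalence_solution}. Conversely, if every summand vanishes, then $S(x,y)=0$, and the lower bound in \eqref{eq_residual} together with $C_1>0$ (which is guaranteed by Definition~\ref{def_frame}) forces $\norm{Ax-y}_Y=0$, i.e.\ $Ax=y$.

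For \eqref{equivalence_nullspace}, I would simply apply \eqref{equivalence_solution} with the choice $y=0 \in Y$. Since $\spr{0,f_k^n}_{Y_n}=0$ for all $n$ and $k$, the condition on the right-hand side of \eqref{equivalence_solution} reduces to $\Lk \cdot \kl{\spr{x_m,e_k^m}_{X_m}}_{m=1}^M = 0$, which by definition means that $\kl{\spr{x_m,e_k^m}_{X_m}}_{m=1}^M \in N(\Lk)$. I do not expect any real obstacle here; the corollary is essentially an unpacking of Lemma~\ref{lem_residual}, with the only mildly subtle point being that a sum of nonnegative reals vanishes precisely when each term vanishes, and that the positivity of the frame bound $C_1$ is what lets us pass from the vanishing of $S(x,y)$ back to the vanishing of the residual.
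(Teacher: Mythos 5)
Your proposal is correct and follows exactly the route the paper takes: the paper's proof consists of the single line that both equivalences ``follow directly from Lemma~\ref{lem_residual}'', and your argument is precisely the unpacking of that statement via the two-sided bound \eqref{eq_residual}, the positivity of $C_1$, and the specialization $y=0$ for the nullspace claim. No gaps.
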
 
\begin{proof}
Both equivalences follow directly from Lemma~\ref{lem_residual}.
\end{proof}

The above corollary, and in particular equation \eqref{equivalence_solution}, point to a strong connection between the structure of the equation $Ax=y$ and the properties of the matrices $\Lk$, which we now analyse in more detail. We first consider the singular-value decomposition of each of the matrices $\Lk$, i.e., the vectors $\vkj \in \C^M$, $\ukj \in \C^N$, and $\mukj \in \R^+$, for $j=1,\dots,\rk $, with $\rk \leq \min\Kl{M,N}$ denoting the rank of $\Lk$, such that
	\begin{equation}\label{eq_SVD_sigma}
	\begin{split}
		\Lk \cdot \boldsymbol{h}  = &\sum\limits_{j=1}^{\rk} \mukj \kl{\vkj^H \,\cdot\, \boldsymbol{h} } \ukj \,,
		\\
		\vkj^H \,\cdot\, \vkl =& \,\, \delta_{jl} \,,
		\quad
		\ukj^H \,\cdot\, \ukl = \delta_{jl} \,,
		\\
		\mu_{k,1} &\geq \dots \geq \mu_{k,\rk} > 0 \,.	
	\end{split}
	\end{equation}
Here, the superscript $H$ denotes the Hermitian, i.e., the complex-conjugate transpose, of a vector. We collect the singular values and vectors into the singular systems $(\mukj,\ukj,\vkj)_{j=1}^{r_k}$. Note that the singular vectors $\vkj$ are eigenvectors of the matrices $\Lk^H \cdot \Lk$ and form bases of $N(\Lk)^\perp \subseteq \C^M$, and similarly, the singular vectors $\ukj$ are eigenvectors of the matrices $\Lk \cdot \Lk^H$ and form bases of $R(\Lk) \subseteq \C^N$. Furthermore, for any $\boldsymbol{w} \in \C^N$ the unique minimizer of minimum-norm of the functional
	\begin{equation*}
		\boldsymbol{h} \mapsto \norm{\Lk \cdot \boldsymbol{h} - \boldsymbol{w}}_{\R^N} \,,
	\end{equation*} 
is given by $\Lk^\dagger \cdot \boldsymbol{w}$, where $\Lk^\dagger$ denotes the \emph{pseudo-inverse} of $\Lk$ defined via
	\begin{equation}\label{eq_Lk_dagger}
		\Lk^\dagger \cdot \boldsymbol{w} := \sum\limits_{j=1}^{\rk} \frac{1}{\mukj} \kl{\ukj^H \,\cdot\, \boldsymbol{w}} \vkj \,. 
	\end{equation}

Using the singular-systems and the pseudo-inverses of the matrices $\Lk$ we now make
\begin{definition}
Let $A$ be as in \eqref{def_A} and let Assumption~\ref{ass_main} hold. Furthermore, let $(\mukj,\ukj,\vkj)_{j=1}^{r_k}$ be the singular systems of the matrices $\Lk$ as defined in \eqref{eq_SVD_sigma}. Then for $y = (y_n)_{n=1}^N \in Y$ we define 
	\begin{equation}\label{def_AD}
	\begin{split}
		\AD y &:= \kl{\Ft_m^* \Kl{\kl{ \Lk^\dagger \cdot \kl{\spr{y_n,f_k^n}_{Y_n}}_{n=1}^N}_m  }_{k\in\N}}_{m=1}^M  \,,
	\end{split}
	\end{equation}
where $\Ft_m$ denotes the frame operator corresponding to the dual frame $\Kl{\ekt^m}_{k\in\N}$.
\end{definition}

Note that due to \eqref{def_F_Fad} and \eqref{eq_Lk_dagger} the above definition of $\AD y$ is equivalent to
	\begin{equation}\label{def_AD_expl}
	\begin{split}
		\AD y  & \,\overset{\eqref{def_F_Fad}}{=}\sum_{k=1}^\infty \kl{   \kl{ \Lk^\dagger \cdot \kl{\spr{y_n,f_k^n}_{Y_n}}_{n=1}^N}_m  \ekt^m  }_{m=1}^M
		\\ &\overset{\eqref{eq_Lk_dagger}}{=}
		\sum_{k=1}^\infty \sum\limits_{j=1}^{\rk}\frac{1}{\mukj}\kl{\ukj^H \,\cdot \kl{ \spr{y_n,f_k^n}_{Y_n}}_{n=1}^N   }\kl{   \kl{  \vkj }_m \, \ekt^m }_{m=1}^M \,.
	\end{split}
	\end{equation}
Concerning the well-definedness of the operator $\AD$, we have the following

\begin{lemma}\label{lemma_Picard}
Let $y = (y_n)_{n=1}^N \in Y$ and let $\AD y$ be defined as in \eqref{def_AD}. Then $\AD y$ is a well-defined element of $X$, i.e., $\norm{\AD y}_X < \infty$, if the following \emph{Picard condition} holds:
	\begin{equation}\label{cond_Picard}
		\sum\limits_{k = 1}^\infty \sum\limits_{j=1}^{\rk}\frac{1}{\mukj^2}\abs{ \kl{ \spr{y_n,f_k^n}_{Y_n}}_{n=1}^N }^2 
		< 
		\infty \,.
	\end{equation}
\end{lemma}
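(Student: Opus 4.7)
\medskip

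\noindent\textbf{Proof plan.} The strategy is to unpack definition \eqref{def_AD} componentwise, use the boundedness of the synthesis operators $\Ft_m^*$ to reduce the claim to $\ell_2$-summability of certain coefficient sequences, and then exploit the SVD representation \eqref{eq_SVD_sigma}--\eqref{eq_Lk_dagger} of $\Lk^\dagger$ together with the Picard condition.

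First, I would abbreviate $w_k := \kl{\spr{y_n,f_k^n}_{Y_n}}_{n=1}^N \in \C^N$ and, for each $m \in \{1,\dots,M\}$, set $a^m_k := \kl{\Lk^\dagger \cdot w_k}_m \in \C$, so that by \eqref{def_AD} we have $\AD y = \kl{\Ft_m^* (a^m_k)_{k\in\N}}_{m=1}^M$. Each dual frame $\{\ekt^m\}_{k\in\N}$ has frame bounds $1/B_2, 1/B_1$ (recalled in Section~\ref{sect_Frames}), hence by \eqref{eq_bound_F_Fadj} the synthesis operator $\Ft_m^* \colon \lt(\N) \to X_m$ is bounded with $\norm{\Ft_m^*} \leq 1/\sqrt{B_1}$. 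Consequently, it suffices to show that $(a^m_k)_{k\in\N} \in \lt(\N)$ for every $m$, since then each $\Ft_m^* (a^m_k)_{k \in \N}$ is a well-defined element of $X_m$ and
\begin{equation*}
    \norm{\AD y}_X^2 \;=\; \sum_{m=1}^M \norm{\Ft_m^* (a^m_k)_{k\in\N}}_{X_m}^2 \;\leq\; \frac{1}{B_1} \sum_{m=1}^M \sum_{k=1}^\infty \abs{a^m_k}^2 \,.
\end{equation*}

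Next, I would compute $\sum_{m=1}^M \abs{a^m_k}^2 = \norm{\Lk^\dagger \cdot w_k}_{\C^M}^2$ using the pseudo-inverse formula \eqref{eq_Lk_dagger} and the orthonormality relations of $\{\vkj\}_{j=1}^{\rk}$ stated in \eqref{eq_SVD_sigma}. This yields
\begin{equation*}
    \sum_{m=1}^M \abs{a^m_k}^2 \;=\; \sum_{j=1}^{\rk} \frac{1}{\mukj^2} \abs{\ukj^H \cdot w_k}^2 \,.
\end{equation*}
Since $\ukj$ is a unit vector in $\C^N$, Cauchy--Schwarz gives $\abs{\ukj^H \cdot w_k}^2 \leq \abs{w_k}^2$, so the double sum is bounded above by $\sum_{k=1}^\infty \sum_{j=1}^{\rk} \tfrac{1}{\mukj^2} \abs{w_k}^2$, which is precisely the Picard condition \eqref{cond_Picard} and is therefore finite by assumption. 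Combining this with the earlier estimate gives $\norm{\AD y}_X^2 \leq \tfrac{1}{B_1} \sum_{k,j} \mukj^{-2} \abs{w_k}^2 < \infty$, completing the argument.

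There is no real obstacle in this proof: the only mildly delicate point is making sure the order of summation is legitimate, which is immediate from absolute convergence once the double sum has been bounded by the Picard quantity. The estimate $\abs{\ukj^H w_k}^2 \leq \abs{w_k}^2$ is loose (the Picard condition could in principle be weakened to $\sum_{k,j} \mukj^{-2} \abs{\ukj^H w_k}^2 < \infty$), but the stated condition is what is needed for the straightforward sufficient criterion and avoids any reference to the singular vectors $\ukj$ on the data side.
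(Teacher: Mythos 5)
Your proof is correct and follows essentially the same route as the paper: bound $\norm{\AD y}_X^2$ by $\tfrac{1}{B_1}\sum_m \norm{a^m}_{\lt(\N)}^2$ via the synthesis operators of the dual frames, then evaluate $\norm{\Lk^\dagger \cdot w_k}^2$ through the SVD of $\Lk$ and compare with \eqref{cond_Picard}. In fact you handle the final step more carefully than the paper does: the paper asserts the \emph{equality} $\sum_k\sum_j \mukj^{-2}\abs{\ukj^H \cdot w_k}^2 = \sum_k\sum_j \mukj^{-2}\abs{w_k}^2$, which in general is only a ``$\leq$'' (exactly the Cauchy--Schwarz estimate you invoke), and your observation that the Picard condition could be weakened to $\sum_{k,j}\mukj^{-2}\abs{\ukj^H\cdot w_k}^2<\infty$ is correct.
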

\begin{proof}
Since the sets $\{e_k^m \}_{k\in\N}$ form frames over $X_m$ with frame bounds $B_1,B_1$, the dual frames $\{\ekt^m\}_{k\in\N}$ form frames over $X_m$ with frame bounds $1/B_2,1/B_1$, and thus with
	\begin{equation}\label{def_helper_am}
		a^m := \Kl{ \kl{\Lk^\dagger \cdot 	\kl{\spr{y_n,f_k^n}_{Y_n}}_{n=1}^N}_m }_{k\in\N}  \,,
		\qquad
		\forall \, m \in \Kl{1\,,\dots\,, M} \,,
	\end{equation}
there follows
	\begin{equation}\label{eq_ADy_norm_estimate}
	\begin{split}
		\norm{\AD y}_X^2 
		=
		\sum_{m=1}^{M} \norm{(\AD y)_m}_{X_m}^2  
		\overset{\eqref{def_AD}}{=}
		\sum_{m=1}^{M} 
		\norm{\Ft_m^* \, a^m}_{X_m}^2
		\overset{\eqref{eq_bound_F_Fadj}}{\leq}
		\frac{1}{B_1} 
		\sum_{m=1}^{M} \norm{a^m }_{\ltN}^2 \,.
	\end{split}
	\end{equation}
Next, note that due to \eqref{eq_Lk_dagger} there holds
	\begin{equation}\label{eq_LkD_SVD}
		(a^m)_k = \kl{ \Lk^\dagger \cdot \kl{\spr{y_n,f_k^n}_{Y_n}}_{n=1}^N}_m = \sum\limits_{j=1}^{\rk}\frac{1}{\mukj}\kl{\ukj^H \,\cdot\kl{ \spr{y_n,f_k^n}_{Y_n}}_{n=1}^N   }\kl{  \vkj }_m \,,
	\end{equation}
and thus
	\begin{equation}\label{eq_helper_4}
	\begin{split}
		\sum_{m=1}^{M} \norm{a^m }_{\ltN}^2     
		& \overset{\eqref{eq_LkD_SVD}}{=}
		\sum_{m=1}^{M} \sum\limits_{k = 1}^\infty \abs{\sum\limits_{j=1}^{\rk}\frac{1}{\mukj}\kl{\ukj^H \,\cdot\kl{ \spr{y_n,f_k^n}_{Y_n}}_{n=1}^N   }\kl{  \vkj }_m}^2 
		\\
		&=
		 \sum\limits_{k = 1}^\infty \norm{\sum\limits_{j=1}^{\rk}\frac{1}{\mukj}\kl{\ukj^H \,\cdot \kl{ \spr{y_n,f_k^n}_{Y_n}}_{n=1}^N } \vkj }_{\R^M}^2 \,.
	\end{split}
	\end{equation}
Furthermore, due to the orthonormality of the singular vectors $\ukj$ and $\vkj$ there holds
	\begin{equation*}
	\begin{split}
		\sum\limits_{k = 1}^\infty \norm{\sum\limits_{j=1}^{\rk}\frac{1}{\mukj}\kl{\ukj^H \,\cdot \kl{ \spr{y_n,f_k^n}_{Y_n}}_{n=1}^N } \vkj }_{\R^M}^2 
		=
		\sum\limits_{k = 1}^\infty \sum\limits_{j=1}^{\rk}\frac{1}{\mukj^2}\abs{ \kl{ \spr{y_n,f_k^n}_{Y_n}}_{n=1}^N }^2\,,
	\end{split}
	\end{equation*}
which together with \eqref{eq_helper_4} yields
	\begin{equation}\label{ineq_helper_am}
	\begin{split}
		\sum_{m=1}^{M} \norm{a^m }_{\ltN}^2
		=
		\sum\limits_{k = 1}^\infty \sum\limits_{j=1}^{\rk}\frac{1}{\mukj^2}\abs{ \kl{ \spr{y_n,f_k^n}_{Y_n}}_{n=1}^N }^2\,.
	\end{split}
	\end{equation}
Hence, together with \eqref{eq_ADy_norm_estimate} we obtain that 
	\begin{equation*}
		\norm{\AD y}_X^2 
		\overset{\eqref{eq_ADy_norm_estimate}}{\leq}
		\frac{1}{B_1} 
		\sum_{m=1}^{M} \norm{a^m }_{\ltN}^2
		\overset{\eqref{ineq_helper_am}}{=} \,\,
		\frac{1}{B_1}  
		\sum\limits_{k = 1}^\infty \sum\limits_{j=1}^{\rk}\frac{1}{\mukj^2}\abs{ \kl{ \spr{y_n,f_k^n}_{Y_n}}_{n=1}^N }^2 \,,
	\end{equation*}
which yields the assertion.
\end{proof}

We are now able to derive the first main result of this paper:
\begin{theorem}\label{thm_main_I}
Let $A$ be as in \eqref{def_A}, let Assumption~\ref{ass_main} hold, and let $y \in R(A)$. Furthermore, assume that $N(\Lk) = \Kl{0}$ and let $\AD y$ be defined as in \eqref{def_AD}. Then $\AD y $ is a well-defined element of $X$ and the unique solution of \eqref{Ax=y}. Additionally, among all possible decompositions of $\AD y$ in terms of the dual frame functions $\ekt^m$, the decomposition~\eqref{def_AD} is the most economical one in the sense of Proposition~\ref{prop_frame_min}.
\end{theorem}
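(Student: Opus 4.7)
The plan is to reduce the statement to the finite-dimensional matrix picture furnished by Assumption~\ref{ass_main}: the hypothesis $N(\Lk) = \Kl{0}$ makes each $\Lk$ left-invertible in the sense that $\Lk^\dagger \Lk$ is the identity on $\C^M$, so the frame coefficients of any preimage of $y$ are uniquely recoverable from those of $y$, and the whole proof then amounts to tracking these coefficients through the synthesis operators $\Ft_m^*$.

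First I would establish uniqueness of solutions. Since $N(\Lk) = \Kl{0}$ for every $k\in\N$, the nullspace equivalence \eqref{equivalence_nullspace} from Corollary~\ref{corollary_equivalences} forces $\kl{\spr{x_m,e_k^m}_{X_m}}_{m=1}^M = 0$ for all $k$ whenever $x \in N(A)$; expanding $x_m$ through the frame reconstruction \eqref{eq_frame_rec} then gives $x = 0$, so $A$ is injective. Combined with $y \in R(A)$, this yields a unique $\tilde{x} \in X$ with $A\tilde{x} = y$. Next I would compute $\AD y$ explicitly against this $\tilde{x}$. By Assumption~\ref{ass_main}, $\kl{\spr{y_n,f_k^n}_{Y_n}}_{n=1}^N = \Lk \cdot \kl{\spr{\tilde{x}_m,e_k^m}_{X_m}}_{m=1}^M$ for each $k$. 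A short SVD calculation using that $N(\Lk)=\Kl{0}$ implies $\rk = M$ and $(\vkj)_{j=1}^M$ is an orthonormal basis of $\C^M$ gives $\Lk^\dagger \Lk = I$ on $\C^M$, so the inner sequence $a^m$ appearing in the definition \eqref{def_AD} of $\AD y$ satisfies $(a^m)_k = \spr{\tilde{x}_m,e_k^m}_{X_m}$ for all $k$ and $m$. Applying $\Ft_m^*$ and invoking \eqref{eq_frame_rec} yields $(\AD y)_m = \sum_{k=1}^\infty \spr{\tilde{x}_m,e_k^m}_{X_m}\ekt^m = \tilde{x}_m$, so $\AD y = \tilde{x}$ is a well-defined element of $X$ and is the unique solution.

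The economy assertion would then follow directly from Proposition~\ref{prop_frame_min} applied to the frame $\Kl{\ekt^m}_{k\in\N}$, whose dual frame is the original $\Kl{e_k^m}_{k\in\N}$. From the preceding step, the representation $(\AD y)_m = \sum_{k=1}^\infty (a^m)_k \ekt^m$ uses coefficients $(a^m)_k = \spr{\tilde{x}_m,e_k^m}_{X_m} = \spr{(\AD y)_m,e_k^m}_{X_m}$, which are precisely the canonical dual-frame coefficients of $(\AD y)_m$ relative to $\Kl{\ekt^m}_{k\in\N}$, and hence minimize the $\lt(\N)$ norm of the coefficient sequence among all admissible expansions.

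The main obstacle I anticipate is the bookkeeping around $\Lk^\dagger \Lk = I$ and its compatibility with the frame synthesis operators: one must carefully verify that inversion by $\Lk^\dagger$ acts componentwise in $k$ on sequences of frame coefficients, and that it is the hypothesis $y \in R(A)$ (rather than the weaker condition \eqref{cond_Picard}) that guarantees $(a^m)_k$ is an honest frame coefficient sequence of an element of $X_m$. Once this is in place, well-definedness follows for free via the upper frame bound in \eqref{eq_bound_F_Fadj}, without any separate verification of the Picard condition \eqref{cond_Picard}.
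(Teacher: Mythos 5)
Your proposal is correct and follows essentially the same route as the paper's proof: both identify the frame coefficients of the (unique, by injectivity via \eqref{equivalence_nullspace}) solution with $\Lk^\dagger\cdot\kl{\spr{y_n,f_k^n}_{Y_n}}_{n=1}^N$ using $N(\Lk)=\Kl{0}$, recover the solution through the reconstruction formula \eqref{eq_dec_x_y}, and invoke Proposition~\ref{prop_frame_min} for the economy claim. Your explicit verification that $\Lk^\dagger\Lk = I$ on $\C^M$ is a detail the paper leaves implicit, but the argument is the same.
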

\begin{proof}
Let $y \in R(A)$ be arbitrary but fixed. Since $y \in R(A)$ there exists a function $\xb \in X$ such that $A \xb = y$. Hence, due to \eqref{equivalence_solution} it follows that for all $k\in\N$, the expansion coefficients $\spr{\xb_m,e_k^m}_{X_m}$ of $\xb$ are solutions of the matrix-vector systems
	\begin{equation*}
		\Lk \cdot \kl{\spr{\xb_m,e_k^m}_{X_m}}_{m=1}^M = \kl{ \spr{y_n,f_k^n}_{Y_n}}_{n=1}^N \,.
	\end{equation*} 	
Since $N(\Lk) = \Kl{0}$, it follows that
	\begin{equation}\label{eq_xb_coeffs}
		\kl{\spr{\xb_m,e_k^m}_{X_m}}_{m=1}^M
		=	\Lk^\dagger \cdot \kl{ \spr{y_n,f_k^n}_{Y_n}}_{n=1}^N
		\,,
	\end{equation}
and thus, via the reconstruction formula \eqref{eq_dec_x_y} we obtain that
	\begin{equation}\label{eq_helper_1}
	\begin{split}
		\xb = \kl{ \sum_{k=1}^\infty  \kl{\Lk^\dagger \cdot \kl{ \spr{y_n,f_k^n}_{Y_n}}_{n=1}^N}_{m} \ekt^m }_{m=1}^M \,.
	\end{split}
	\end{equation}
Comparing \eqref{eq_helper_1} with \eqref{def_AD} and \eqref{def_AD_expl} we conclude that $\xb = \AD y$, and thus $\AD y$ is both a well-defined element of $X$ and a solution of \eqref{Ax=y}. Furthermore, since
	\begin{equation*}
		\spr{(\AD y)_m, e_k^m}_{X_m} = \spr{\xb_m, e_k^m}_{X_m}  \overset{\eqref{eq_xb_coeffs}}{=} 	\kl{\Lk^\dagger \cdot \kl{ \spr{y_n,f_k^n}_{Y_n}}_{n=1}^N }_m\,,
	\end{equation*}
it follows from Proposition~\ref{prop_frame_min} that \eqref{def_AD} is the most economic decomposition of $\xb = \AD y$ in terms of the dual frame functions $\ekt^m$. Finally, since we assumed that $N(\Lk) = 0$, it follows from \eqref{equivalence_nullspace} that $N(A) = 0$ and thus that the operator $A$ is injective. Hence, $\AD y$ is the unique solution of \eqref{Ax=y}, which concludes the proof.
\end{proof}

\vspace{0pt}
\begin{remark}
In Theorem~\ref{thm_main_I} we assumed that $N(\Lk) = \Kl{0}$, which implied the injectivity of the operator $A$ and thus the uniqueness of a solution of \eqref{Ax=y}. On the other hand, if $N(\Lk) \neq \Kl{0}$ then due to \eqref{equivalence_solution}, for any solution $x = (x_m)_{m=1}^M \in X$ of \eqref{Ax=y} there holds
	\begin{equation}\label{eq_projected_solution}
		P_{N(\Lk)^\perp}   \kl{\spr{x_m,e_k^m}_{X_m}}_{m=1}^M  =  \Lk^\dagger \cdot \kl{\spr{y_n,f_k^n}_{Y_n}}_{n=1}^N \,,
		\qquad
		\forall \, k \in \N \,.
	\end{equation}
Now, since for $y \in R(A)$ the full solution set of \eqref{Ax=y} is given by $\xd + N(A)$, there exists an element $\xb \in N(A)$ such that $x = \xd + \xb$. In particular, due to \eqref{equivalence_nullspace} there holds
	\begin{equation*}
		\kl{\spr{\xb_m ,e_k^m }_{X_m}}_{m=1}^M \in N(\Lk) \,, 
		\qquad 
		\forall \, k \in \N \,.
	\end{equation*}
Hence, it follows from \eqref{eq_projected_solution} that 
	\begin{equation*}
		P_{N(\Lk)^\perp}   \kl{\spr{x^\dagger_m,e_k^m}_{X_m}}_{m=1}^M  =  \Lk^\dagger \cdot \kl{\spr{y_n,f_k^n}_{Y_n}}_{n=1}^N \,,
		\qquad 
		\forall \, k \in \N \,,
	\end{equation*}
which can be rewritten as		
	\begin{equation*}
	\begin{split}
		\kl{\spr{x^\dagger_m,e_k^m}_{X_m}}_{m=1}^M  
		&=  \Lk^\dagger \cdot \kl{\spr{y_n,f_k^n}_{Y_n}}_{n=1}^N  + (I - P_{N(\Lk)^\perp})   \kl{\spr{x^\dagger_m,e_k^m}_{X_m}}_{m=1}^M 
		\\
		&= \Lk^\dagger \cdot \kl{\spr{y_n,f_k^n}_{Y_n}}_{n=1}^N  + P_{N(\Lk)}   \kl{\spr{x^\dagger_m,e_k^m}_{X_m}}_{m=1}^M \,.
	\end{split}
	\end{equation*}
Now by the definition \eqref{def_AD} of $\AD y$ there holds
	\begin{equation*}
		\kl{\AD y }_m = \Ft_m^* \Kl{ \kl{ \Lk^\dagger \cdot \kl{\spr{y_n,f_k^n}_{Y_n}}_{n=1}^N }_m }_{k\in\N} \,,
	\end{equation*}
and thus
	\begin{equation*}
		\xd  = \AD y 
		+ \kl{ \Ft_m^* a^m }_{m=1}^M \,,
		\qquad
		a^m = \Kl{  \kl{ P_{N(\Lk)}   \kl{\spr{x^\dagger_\mb,e_k^\mb}_{X_\mb}}_{\mb=1}^M }_m }_{k\in\N} \,.
	\end{equation*}
Hence, we see that the frame coefficients of $\xd$ and the nullspaces $N(\Lk)$ directly determine the distance of $\AD y$ to the minimum-norm solution $\xd$.
\end{remark}\vspace{7pt}

Next, we proceed to derive the second main result of this paper:

\begin{theorem}\label{thm_main_II}
Let $A$ be as in \eqref{def_A}, let Assumption~\ref{ass_main} hold, and let  $y = (y_n)_{n=1}^N \in Y$. Furthermore, let the frames $\{f_k^n\}_{k\in\N}$ be tight and assume that
	\begin{equation}\label{condition_Range}
		\Kl{ \kl{\Lk^\dagger \cdot \kl{\spr{y_n,f_k^n}_{Y_n}}_{n=1}^N}_m }_{k\in\N} \in R(F_m)  \,,
		\qquad
		\forall \, m \in \Kl{1\,,\dots\,, M} \,,
	\end{equation}
where $F_m$ denotes the frame operator corresponding to the frame $\Kl{e_k^m}$. Then $\AD y$ as given in \eqref{def_AD} is a well-defined element of $X$ and a minimum-coefficient least-squares solution of equation \eqref{Ax=y}, i.e., it is a least-squares solution of \eqref{Ax=y} satisfying
	\begin{equation}\label{eq_AD_min}
		\sum\limits_{m=1}^M \abs{\spr{(\AD y)_m , e_k^m}_{X_m} }^2
		\leq
		\sum\limits_{m=1}^M \abs{\spr{x^*_m , e_k^m}_{X_m} }^2 \,,
	\end{equation}
for any least-squares solution $x^* = (x^*_m)_{m=1}^M  \in X$ of \eqref{Ax=y} and any $k \in \N$. Furthermore,  
	\begin{equation}\label{eq_distance_ADy_xD}
		\norm{\xd}_X \leq \norm{\AD y}_X \leq \sqrt{B_2/B_1} \norm{\xd}_X \,,
	\end{equation}
where $B_1$ and $B_2$ are the frame bounds of $\{e_k^m\}_{k\in \N}$. Hence, if also the frames $\{e_k^m\}_{k\in \N}$ are tight, then $\AD y$ coincides with the minimum-norm least-squares solution $\xd$. 

On the other hand, if additionally there holds
	\begin{equation}\label{cond_range_solvable}
		\kl{ \spr{y_n,f_k^n}_{Y_n}}_{n=1}^N \in R(\Lk)  \,, 
		\qquad
		\forall \, k \in \N \,,
	\end{equation}
then $\AD y$ is also a solution of \eqref{Ax=y}. Finally, among all possible decompositions of $\AD y$ in terms of the dual frame functions $\ekt^m$, the decomposition~\eqref{def_AD} is the most economical one in the sense of Proposition~\ref{prop_frame_min}.
\end{theorem}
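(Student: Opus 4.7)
The proof follows from a systematic exploitation of the tightness of $\{f_k^n\}_{k\in\N}$ together with the range condition~\eqref{condition_Range}. The central observation is that tightness with common constant $C := C_1 = C_2$ turns the two-sided estimate in Lemma~\ref{lem_residual} into the equality
\begin{equation*}
C\norm{Ax-y}_Y^2 = \sum_{k=1}^\infty \norm{\Lk \cdot \kl{\spr{x_m,e_k^m}_{X_m}}_{m=1}^M - \kl{\spr{y_n,f_k^n}_{Y_n}}_{n=1}^N}_{\R^N}^2\,,
\end{equation*}
so that minimizing $\norm{Ax-y}_Y$ reduces to simultaneously minimizing every summand on the right, which by standard pseudo-inverse theory is achieved precisely when the argument equals $\Lk^\dagger \cdot \kl{\spr{y_n,f_k^n}_{Y_n}}_{n=1}^N$ modulo an element of $N(\Lk)$.

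The key technical step is identifying the coefficients of $\AD y$ with respect to $\{e_k^m\}_{k\in\N}$. Writing $a^m := \Kl{\kl{\Lk^\dagger \cdot \kl{\spr{y_n,f_k^n}_{Y_n}}_{n=1}^N}_m}_{k\in\N}$, tightness of $\{f_k^n\}$ gives $\sum_{n,k}|\spr{y_n,f_k^n}|^2 = C\norm{y}_Y^2 < \infty$, and~\eqref{condition_Range} places $a^m \in R(F_m) \subseteq \lt(\N)$, so definition~\eqref{def_AD} yields $(\AD y)_m = \Ft_m^* a^m \in X_m$ and hence $\AD y \in X$. Applying $F_m$ and using $F_m\Ft_m^* = P_m$ from~\eqref{eq_frame_ops} together with $a^m \in R(F_m)$ we obtain $F_m(\AD y)_m = a^m$, that is,
\begin{equation*}
\kl{\spr{(\AD y)_m,e_k^m}_{X_m}}_{m=1}^M = \Lk^\dagger \cdot \kl{\spr{y_n,f_k^n}_{Y_n}}_{n=1}^N\,, \qquad \forall\,k\in\N\,.
\end{equation*}
Combined with the first paragraph, this shows $\AD y$ is a least-squares solution. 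For~\eqref{eq_AD_min}, any other least-squares solution $x^*$ must have its coefficient vector lie in the affine set $\Lk^\dagger \cdot \kl{\spr{y_n,f_k^n}_{Y_n}}_{n=1}^N + N(\Lk)$, and since the pseudo-inverse image lies in $N(\Lk)^\perp$ it has minimum Euclidean norm in this set, giving~\eqref{eq_AD_min} for every $k$.

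For the distance bounds, applying the lower frame bound of $\{e_k^m\}$ to $\AD y$ and the upper frame bound to $\xd$, and summing the inequality~\eqref{eq_AD_min} over $k$, yields $B_1\norm{\AD y}_X^2 \leq B_2\norm{\xd}_X^2$, while $\norm{\xd}_X \leq \norm{\AD y}_X$ is immediate because $\xd$ is the minimum-norm least-squares solution and $\AD y$ is a least-squares solution. Tightness of $\{e_k^m\}$ with $B_1 = B_2$ collapses these to equality, forcing $\AD y = \xd$ by uniqueness of the minimum-norm least-squares solution. Under the additional condition~\eqref{cond_range_solvable}, the coefficient identity above combined with $\Lk \Lk^\dagger w = w$ for $w \in R(\Lk)$ gives $\Lk \cdot \kl{\spr{(\AD y)_m,e_k^m}}_{m=1}^M = \kl{\spr{y_n,f_k^n}}_{n=1}^N$ for every $k$, so~\eqref{equivalence_solution} yields $A\AD y = y$. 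Finally, the economy claim follows from Proposition~\ref{prop_frame_min} applied with the roles of $\{e_k^m\}$ and $\{\ekt^m\}$ interchanged: the decomposition~\eqref{def_AD} writes $(\AD y)_m = \sum_k (a^m)_k \ekt^m$, and by the coefficient identity just established, $(a^m)_k = \spr{(\AD y)_m,e_k^m}_{X_m}$, which is exactly the choice Proposition~\ref{prop_frame_min} singles out as most economical. The main obstacle is the coefficient-identification step in the second paragraph: condition~\eqref{condition_Range} is precisely what is needed so that $F_m\Ft_m^*$ acts as the identity on $a^m$ rather than merely as the projector $P_m$, and without it all subsequent claims collapse.
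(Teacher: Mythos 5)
Your proposal is correct and follows essentially the same route as the paper's proof: the coefficient identification $\spr{(\AD y)_m,e_k^m}_{X_m} = \kl{\Lk^\dagger\cdot\kl{\spr{y_n,f_k^n}_{Y_n}}_{n=1}^N}_m$ via $F_m\Ft_m^* = P_m$ and condition~\eqref{condition_Range}, the reduction of the least-squares problem to the decoupled finite-dimensional minimizations through the tightness equality in Lemma~\ref{lem_residual}, the pseudo-inverse orthogonality for \eqref{eq_AD_min}, and the frame-bound sandwich for \eqref{eq_distance_ADy_xD}. Your summing of \eqref{eq_AD_min} over $k$ to get $B_1\norm{\AD y}_X^2 \leq B_2\norm{\xd}_X^2$ is in fact a slightly cleaner rendering of the paper's own (typographically garbled) chain of inequalities, and your explicit remark that Proposition~\ref{prop_frame_min} is applied with the roles of the frame and its dual interchanged is a point the paper leaves implicit.
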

\begin{proof}
Let $y = (y_n)_{n=1}^N \in Y$ be arbitrary but fixed and let $\AD y$ be given as in \eqref{def_AD}, i.e., 
	\begin{equation*}
		\AD y := \kl{\Ft_m^* \Kl{\kl{ \Lk^\dagger \cdot \kl{\spr{y_n,f_k^n}_{Y_n}}_{n=1}^N}_m  }_{k\in\N}}_{m=1}^M  \,.
	\end{equation*}
First of all, note that due to \eqref{eq_ADy_norm_estimate} there holds	
	\begin{equation*}
		\norm{\AD y}_X^2 \leq \frac{1}{B_1} 
		\sum_{m=1}^{M} \norm{\Kl{\kl{ \Lk^\dagger \cdot \kl{\spr{y_n,f_k^n}_{Y_n}}_{n=1}^N}_m  }_{k\in\N}}_{\ltN}^2 \,,
	\end{equation*}	
Hence, since we assumed that \eqref{condition_Range} holds and since $R(F_m) \subset \ltN$, it follows that $\AD y$ is a well-defined element of $X$. Furthermore, since we have
	\begin{equation*}
		\Kl{ \spr{(\AD y)_m , e_k^m }_{X_m} }_{k\in\N} = F_m \kl{(\AD y)_m } 
		= F_m \Ft_m^* \Kl{\kl{ \Lk^\dagger \cdot \kl{\spr{y_n,f_k^n}_{Y_n}}_{n=1}^N}_m }_{k\in\N} \,,  
	\end{equation*} 
and since, with $P_m$ denoting the orthogonal projector onto $R(F_m)$, there holds
	\begin{equation*}
	\begin{split}
		F_m \Ft_m^* \Kl{\kl{ \Lk^\dagger \cdot \kl{\spr{y_n,f_k^n}_{Y_n}}_{n=1}^N}_m }_{k\in\N} 
		&\overset{\eqref{eq_frame_ops}}{=}
		P_{m} \Kl{\kl{ \Lk^\dagger \cdot \kl{\spr{y_n,f_k^n}_{Y_n}}_{n=1}^N}_m }_{k\in\N}
		\\ 
		&\overset{\eqref{condition_Range}}{=} \Kl{\kl{ \Lk^\dagger \cdot \kl{\spr{y_n,f_k^n}_{Y_n}}_{n=1}^N}_m }_{k\in\N} \,,
	\end{split}
	\end{equation*}
it follows that
	\begin{equation}\label{AD_canonic_expansion_coefficients}
		\spr{(\AD y)_m , e_k^m }_{X_m}  = \kl{ \Lk^\dagger \cdot \kl{\spr{y_n,f_k^n}_{Y_n}}_{n=1}^N}_m  \,.  
	\end{equation} 
Now, by assumption the sets $\{f_k^n\}_{k\in\N}$ form tight frames over the spaces $Y_n$, i.e., they satisfy \eqref{eq_frame_fk} with $C_1 = C_2 = C$ for some $C > 0$. Hence, it follows from \eqref{eq_residual} that
	\begin{equation}\label{eq_residual_tight}
		\norm{Ax - y}_Y^2 
		= \frac{1}{C} \sum_{k=1}^\infty
		\norm{ \Lk \cdot \kl{\spr{x_m,e_k^m}_{X_m}}_{m=1}^M - \kl{ \spr{y_n,f_k^n}_{Y_n}}_{n=1}^N }_{\R^N}^2  \,.
	\end{equation}
Hence, if $x = (x_m)_{m=1}^M\in X$ can be found such that for each $k \in \N$ its expansion coefficients $\spr{x_m,e_k^m}_{X_m}$ minimize the expressions 
	\begin{equation}\label{min_problems}
		\norm{ \Lk \cdot \kl{\spr{x_m,e_k^m}_{X_m}}_{m=1}^M - \kl{ \spr{y_n,f_k^n}_{Y_n}}_{n=1}^N }_{\R^N}  \,,
	\end{equation}
then $x$ is a minimizer of $\norm{Ax - y}_Y$ and thus a least-squares solution of \eqref{Ax=y}. Due to \eqref{AD_canonic_expansion_coefficients} and the properties of the pseudo-inverse $\Lk^\dagger$ of $\Lk$, this is exactly satisfied for the choice $x = \AD y$. Hence, it follows that $\AD y$ is a least-squares solution of \eqref{Ax=y}. Furthermore, it follows that any other least-squares solution $x^*$ of \eqref{Ax=y} also has to minimize each of the expressions \eqref{min_problems}, and thus is of the form
	\begin{equation*}
		\kl{\spr{x^*_m,e_k^m}_{X_m}}_{m=1}^M = \Lk^\dagger \cdot \kl{\spr{y_n,f_k^n}_{Y_n}}_{n=1}^N + \bar{z}_k \,,
		\qquad 
		\forall \, k \in \N \,,
	\end{equation*}
for some vectors $\bar{z}_k \in N(\Lk)$. In particular, we have the orthogonality relation
	\begin{equation}\label{ineq_helper_1}
		\norm{\kl{\spr{x^*_m,e_k^m}_{X_m}}_{m=1}^M}_{\R^M}^2 = \norm{\Lk^\dagger \cdot \kl{\spr{y_n,f_k^n}_{Y_n}}_{n=1}^N }_{\R^M}^2 + \norm{\bar{z}_k}_{\R^M}^2 \,,
		\qquad 
		\forall \, k \in \N \,,
	\end{equation}
which follows from the properties of the pseudo-inverse. Using this, we get
	\begin{equation*}
	\begin{split}
		\sum_{m=1}^{M} \abs{\spr{(\AD y)_m,e_k^m}_{X_m}}^2 
		&= 
		\norm{ \kl{\spr{(\AD y)_m,e_k^m}_{X_m}}_{m=1}^M }_{\R^M}^2 
		\overset{\eqref{AD_canonic_expansion_coefficients}}{=}
		\norm{ \Lk^\dagger \cdot \kl{\spr{y_n,f_k^n}_{Y_n}}_{n=1}^N }_{\R^M}^2 
		\\
		&
		\overset{\eqref{ineq_helper_1}}{\leq}
		\norm{\kl{\spr{x^*_m,e_k^m}_{X_m}}_{m=1}^M }_{\R^M}^2
		= 
		\sum_{m=1}^{M} \abs{\spr{x^*_m,e_k^m}_{X_m}}^2 \,,
	\end{split}
	\end{equation*}
which yields \eqref{eq_AD_min}. If in addition \eqref{cond_range_solvable} is satisfied, then due to \eqref{AD_canonic_expansion_coefficients} and the properties of the pseudo-inverse there holds
	\begin{equation*}
		\Lk \cdot \kl{\spr{(\AD y)_m,e_k^m}_{X_m}}_{m=1}^M = \kl{ \spr{y_n,f_k^n}_{Y_n}}_{n=1}^N  \,, 
		\qquad
		\forall \, k \in \N \,,
	\end{equation*}
and thus it follows by choosing $x = \AD y$ in \eqref{eq_residual_tight} that $\AD y$ is also a solution of \eqref{Ax=y}. 
\\ \indent
Next, note that since the sets $\{e_k^m\}_{k\in\N}$ form frames over $X_m$ with frame bounds $B_1,B_2$, it follows from \eqref{eq_frame_ek} that for any $x = (x_m)_{m=1}^M \in X$ there holds 
	\begin{equation}\label{helper_xframe}
		B_1 \norm{x}_{X}^2 \leq 
		\sum_{m=1}^M \sum\limits_{k = 1}^\infty \abs{\spr{x_m,e_k^m}}^2 
		= 
		\sum\limits_{k = 1}^\infty\norm{ \kl{\spr{x_m,e_k^m}}_{m=1}^M}_{\R^M}^2 
		\leq B_2  \norm{x}_{X}^2 \,.
	\end{equation}	
This holds in particular for the minimum-norm least-squares solution $\xd$, and thus, since we saw above that $\AD y$ is also a least-squares solution, it follows that
	\begin{equation*}
	\begin{split}
		\norm{\xd}_X \leq \norm{\AD y }_X
		&\overset{\eqref{helper_xframe}}{\leq}
		\frac{1}{B_1} 
		\sum\limits_{k = 1}^\infty\norm{ \kl{\spr{(\AD y)_m,e_k^m}}_{m=1}^M}_{\R^M}^2 
		\\
		&\overset{\eqref{eq_AD_min}}{\leq}
		\frac{1}{B_1} 
		\sum\limits_{k = 1}^\infty\norm{ \kl{\spr{\xd_m,e_k^m}}_{m=1}^M}_{\R^M}^2
		\overset{\eqref{helper_xframe}}{\leq}
		\frac{B_2}{B_1} \norm{\xd}_X^2 \,,
	\end{split}
	\end{equation*}
which yields \eqref{eq_distance_ADy_xD}. Hence, if the frames $\{e_k^m\}_{k\in\N}$ are tight, i.e., if $B_1 = B_2 = B$ for some $B > 0$, then it follows from the uniqueness of the minimum-norm solution that $\AD y$ coincides with $\xd$. Finally, note that due to \eqref{AD_canonic_expansion_coefficients} and Proposition~\ref{prop_frame_min} it follows that the decomposition of $\AD y$ given in \eqref{def_AD} is the most economical one in terms of the dual frame functions $\ekt^m$, which concludes the proof. 
\end{proof}

A useful consequence of the above theorem is the following
\begin{corollary}\label{corollary_main_I}
Let $A$ be as in \eqref{def_A}, let  $y \in Y$, and let Assumption~\ref{ass_main} hold.  Furthermore, let the frames $\{f_k^n\}_{k\in\N}$ be tight, the frames $\{e_k^m\}_{k\in\N}$ be exact and assume that the Picard condition \eqref{cond_Picard} holds. Then $\AD y$ as given in \eqref{def_AD} is a well-defined element of $X$ and a minimum-coefficient least-squares solution of equation \eqref{Ax=y}, i.e., it is a least-squares solution of \eqref{Ax=y} satisfying \eqref{eq_AD_min} for any least-squares solution $x^* = (x^*_m)_{m=1}^M \in X$ of \eqref{Ax=y} and any $k \in \N$. Furthermore, there holds \eqref{eq_distance_ADy_xD}, where $B_1$ and $B_2$ are the frame bounds of $\{e_k^m\}_{k\in \N}$. Hence, if also the frames $\{e_k^m\}_{k\in \N}$ are tight, then $\AD y$ coincides with the minimum-norm least-squares solution $\xd$. On the other hand, if additionally \eqref{cond_range_solvable} holds then $\AD y$ is also a solution of \eqref{Ax=y}. Finally, among all possible decompositions of $\AD y$ in terms of the dual frame functions $\ekt^m$, the decomposition~\eqref{def_AD} is the most economical one in the sense of Proposition~\ref{prop_frame_min}.	
\end{corollary}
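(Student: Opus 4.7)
The plan is to recognise that Corollary~\ref{corollary_main_I} is essentially a repackaging of Theorem~\ref{thm_main_II}: its hypotheses differ only in that the range condition \eqref{condition_Range} has been replaced by the pair ``$\{e_k^m\}_{k\in\N}$ is exact'' and ``the Picard condition \eqref{cond_Picard} holds''. Consequently, if I can show that these two new hypotheses together imply \eqref{condition_Range}, then every conclusion of the corollary follows verbatim from the corresponding conclusion of Theorem~\ref{thm_main_II}, without further work.

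First, I would use exactness to reduce \eqref{condition_Range} to a pure $\lt(\N)$-summability condition. By the equivalences in \eqref{eq_frame_equivalences}, an exact frame is a Riesz basis, which is characterised by $N(F_m^*)=\{0\}$. Combining this with the orthogonal decomposition $\lt(\N) = R(F_m) \oplus N(F_m^*)$ noted just after \eqref{eq_frame_equivalences} gives $R(F_m)=\lt(\N)$ for every $m$. Hence \eqref{condition_Range} is equivalent to requiring only that the sequence
\begin{equation*}
a^m := \Kl{\kl{\Lk^\dagger \cdot \kl{\spr{y_n,f_k^n}_{Y_n}}_{n=1}^N}_m}_{k\in\N}
\end{equation*}
lies in $\lt(\N)$ for each $m\in\{1,\dots,M\}$.

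Second, I would invoke exactly the computation \eqref{ineq_helper_am} from the proof of Lemma~\ref{lemma_Picard}, which expresses
\begin{equation*}
\sum_{m=1}^M \norm{a^m}_{\ltN}^2 = \sum_{k=1}^\infty \sum_{j=1}^{r_k} \frac{1}{\mukj^2}\abs{\kl{\spr{y_n,f_k^n}_{Y_n}}_{n=1}^N}^2,
\end{equation*}
and observe that the right-hand side is precisely what the Picard condition \eqref{cond_Picard} requires to be finite. Thus every $a^m$ is in $\lt(\N)=R(F_m)$, so \eqref{condition_Range} is verified.

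With \eqref{condition_Range} in hand, Theorem~\ref{thm_main_II} immediately yields every claim in the corollary: well-definedness of $\AD y$, the minimum-coefficient least-squares property \eqref{eq_AD_min}, the two-sided norm estimate \eqref{eq_distance_ADy_xD} and the consequent coincidence with $\xd$ in the tight case, the solution property under the additional hypothesis \eqref{cond_range_solvable}, and the economy of the decomposition \eqref{def_AD} via Proposition~\ref{prop_frame_min}. I do not anticipate any genuine obstacle; the only non-routine step is the first reduction, and that is a direct appeal to the standard frame-theoretic equivalences of \eqref{eq_frame_equivalences}. The proof is therefore best presented as a two-line deduction from Theorem~\ref{thm_main_II} plus Lemma~\ref{lemma_Picard}.
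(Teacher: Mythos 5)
Your proposal is correct and follows essentially the same route as the paper's own proof: verify \eqref{condition_Range} by combining the identity \eqref{ineq_helper_am} with the Picard condition to get $a^m\in\lt(\N)$, use exactness (via \eqref{eq_frame_equivalences}) to conclude $R(F_m)=\lt(\N)$, and then invoke Theorem~\ref{thm_main_II}. As a minor point in your favour, you correctly write $N(F_m^*)=\{0\}$ where the paper's proof has the typo ``$R(F_m^*)=0$''.
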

\begin{proof}
Let $y = (y_n)_{n=1}^N \in Y$ be arbitrary but fixed and consider the sequences 
	\begin{equation*}
		a^m := \Kl{ \kl{\Lk^\dagger \cdot \kl{\spr{y_n,f_k^n}_{Y_n}}_{n=1}^N}_m }_{k\in\N}  \,,
		\qquad
		\forall \, m \in \Kl{1\,,\dots\,, M} \,.
	\end{equation*}
which we already considered in the proof of Lemma~\ref{lemma_Picard}. Together with \eqref{ineq_helper_am}, we obtain
	\begin{equation*}
		\sum_{m=1}^M \norm{a^m}_\ltN^2 
		\overset{\eqref{ineq_helper_am}}{=} 
		\sum\limits_{k = 1}^\infty \sum\limits_{j=1}^{\rk}\frac{1}{\mukj^2}\abs{ \kl{ \spr{y_n,f_k^n}_{Y_n}}_{n=1}^N }^2 
		\overset{\eqref{cond_Picard}}{<} \infty \,.
	\end{equation*}	
Hence, it follows that $a^m \in \ltN$ for each $m \in \Kl{1\,,\dots\,, M}$. Furthermore, since we assumed that the frames $\{e_k^m \}_{k\in\N}$ are exact, it follows from \eqref{eq_frame_equivalences} that $R(F_m^*) = 0$. and thus that $R(F_m) = \ltN$. Hence, we have that $a^m \in R(F_m)$ for each $m \in \Kl{1\,,\dots\,, M}$, which shows that \eqref{condition_Range} holds. As a result, Theorem~\ref{thm_main_II} is applicable, which yields the assertions and thus concludes the proof.
\end{proof}

Table~\ref{table_requirements} summarizes the assumptions used above to show that $\AD y$ is either the unique solution or a minimum-coefficient/minimum-norm (least squares) solution of~\eqref{Ax=y}. 

\begin{table}[ht!]
	\centering
	\begin{scriptsize}
		\begin{tabular}{ | l | c  c  c  c  c  c |}
			\hline
			\multirow{2}{*}{Solution type of $\AD y$} & \multirow{2}{*}{Assumption~\ref{ass_main}}  & $N(\Lk) = \Kl{0}$ & $\{f_k^n\}_{k\in\N}$ & \eqref{condition_Range}; or $\{e_k^m\}_{k\in\N}$  & \multirow{2}{*}{\eqref{cond_range_solvable}} & $\{e_k^m\}_{k\in \N}$ 
			\\
			& & $y \in R(A)$ & tight & exact and \eqref{cond_Picard} & & tight 
			\\ 
			\hline 
			\multirow{2}{*}{unique solution} & \mcX & \mcX & & & & 
			\\
			 & & & & &  &
			\\
			\multirow{2}{*}{min.-coeff. least-squares} & \mcX & & \mcX & \mcX & & 
			\\
			 & & & & & & 
			\\
			\multirow{2}{*}{min.-coeff. solution} & \mcX & & \mcX & \mcX & \mcX &  
			\\
			 & & & & & & 
			\\
			\multirow{2}{*}{min.-norm least-squares} & \mcX & & \mcX  & \mcX &  & \mcX  
			\\
			 & & & & & & 
			\\
			\multirow{2}{*}{min.-norm solution} & \mcX &  & \mcX  & \mcX & \mcX & \mcX 
			\\
			& & & & & & 
			\\
			\hline			
		\end{tabular}
	\end{scriptsize}
	\caption{Summary of requirements guaranteeing $\AD y$ to be a certain type of solution.}
	\label{table_requirements}
\end{table}

\vspace{0pt}
\begin{remark} 
In Theorem~\ref{thm_main_I} and Corollary~\ref{corollary_main_I} we assumed that the frames $\{f_k^n\}_{k\in\N}$ are tight to deduce that $\AD y$ is a least squares solution of \eqref{Ax=y}. However, even if that is not the case, then it follows from the fact that the expansion coefficients \eqref{AD_canonic_expansion_coefficients} of $\AD y$ are minimizers of the functionals \eqref{min_problems} that for any $x = (x_m)_{m=1}^M \in X$ there holds
	\begin{equation*}
	\begin{split}
		& C_1 \norm{A (\AD y) - y}_Y^2 
		\overset{\eqref{eq_residual}}{\leq} 
		\sum_{k=1}^\infty
		\norm{ \Lk \cdot \kl{\spr{(\AD y)_m,e_k^m}_{X_m}}_{m=1}^M - \kl{ \spr{y_n,f_k^n}_{Y_n}}_{n=1}^N }_{\R^N}^2  
		\\
		& \qquad
		\leq 
		\sum_{k=1}^\infty
		\norm{ \Lk \cdot \kl{\spr{x_m,e_k^m}_{X_m}}_{m=1}^M - \kl{ \spr{y_n,f_k^n}_{Y_n}}_{n=1}^N }_{\R^N}^2 
		\overset{\eqref{eq_residual}}{\leq}
		C_2 \norm{Ax - y}_Y^2 \,.
	\end{split}
	\end{equation*}	
In particular, for any least squares solution $x^*$ of \eqref{Ax=y} there holds	
	\begin{equation*}
		\norm{A (\AD y) - y}_Y
		\leq
		\sqrt{C_2/C_1} \norm{A x^* - y}_Y \,,
	\end{equation*} 
which implies that even in the case that the frames $\{f_k^n\}_{k \in \N}$ are not tight, the function $\AD y$ is at most a factor $\sqrt{C_2/C_1}$ away from being a least-squares solution of \eqref{Ax=y}. Furthermore, this shows that if \eqref{Ax=y} is solvable then $\AD y$ is a solution of \eqref{Ax=y} given that either \eqref{cond_range_solvable} holds or that the frames $\{e_k^m\}_{k\in\N}$ are exact.
\end{remark}\vspace{7pt}

\begin{remark}
The above results can be generalized by allowing a more general linear relationship between the coefficients $\spr{A_n x,f_k^n}_{Y_n}$ and $\spr{x_m,e_j^m}_{X_m}$ than the one in \eqref{cond_frames}. In particular, it is possible to allow a linear relationship between coefficients corresponding to multiple different values of $j$ and $k$, as long as there is no ``overlap''. More precisely, one can assume that there exist $(N \cdot K(X,k)) \times (M \cdot K(Y,k))$ matrices $\Lkb$ such that
	\begin{equation}\label{eq_frames_generalized}
		\kl{\kl{\spr{A_n x,f_j^n}_{Y_n}}_{n=1}^N }_{j \in K(Y,k)} = \Lkb \cdot \kl{\kl{\spr{x_m,e_j^m}_{X_m}}_{m=1}^M }_{j \in K(X,k)} \,,
		\qquad 
		\forall \,k \in \N \, \,,
	\end{equation}
where the finite index sets $K(X,k) \subset \N$ and $K(Y,k) \subset \N$ are such that the families $\{K(X,k)\}_{k\in\N}$ and $\{K(Y,k)\}_{y\in\N}$ are pairwise disjoint partitions of $\N$. Hence, each of the coefficients $\spr{A_n x,f_j^n}_{Y_n}$ and $\spr{x_m,e_j^m}_{X_m}$ appears in only one linear relationship, i.e., only for one value of $k$. Condition \eqref{cond_frames} then corresponds to the special case that the matrices $\Lkb$ are block-diagonal, and thus that \eqref{eq_frames_generalized} decouples. With this, analogous to the above results can still be proven, which we later use in Section~\ref{sect_tomo_atmos}.
\end{remark}\vspace{7pt}

\begin{remark}
Frame decompositions can be used to define stable approximations $\xad$ of $\AD y$ in the presence of noisy data $\yd = (\yd_n)_{n=1}^N \in Y$. Considering for example the definition of $\AD y$ given in  \eqref{def_AD}, then in analogy to \eqref{def_xad} one can define the approximation
	\begin{equation}\label{helper_xad}
		\xad = \sum_{k=1}^\infty \sum\limits_{j=1}^{\rk} g_\alpha(\mukj)  \kl{\ukj^H \,\cdot \kl{ \spr{y_n^\delta,f_k^n}_{Y_n}}_{n=1}^N   }\kl{   \kl{  \vkj }_m \, \ekt^m }_{m=1}^M \,,
	\end{equation}
where $g_\alpha : \R \to \R$ is a suitable approximation of the function $s \mapsto 1/s$, such as
	\begin{equation*}
		g_\alpha(s) := \frac{1}{s+\alpha} \,,
		\qquad
		g_{\alpha,n}(s) = \frac{(s+\alpha)^n - \alpha^n}{s(s + \alpha)^n}
		\,,
		\quad
		\text{or}
		\quad
		g_\alpha(s) := 
		\begin{cases}
			1/s \,, & s \geq \alpha \,,
			\\
			0 \,, & s < \alpha \,.
		\end{cases}
	\end{equation*}
In the SVD case \eqref{def_xad}, these choices correspond to Tikhonov and iterated Tikhonov regularization, as well as the truncated SVD/spectral cut-off method, respectively \cite{Engl_Hanke_Neubauer_1996}. Many iterative regularization methods such as Landweber iteration or the Brakhage $\nu$-methods also have a characterization in terms of such a spectral filter function $g_\alpha$.

From \eqref{helper_xad} it follows as in the proof of Lemma~\ref{lemma_Picard} that
	\begin{equation*}
		\norm{\xa - \xad}_X^2 
		\leq
		\frac{1}{B_1}  
		\sum\limits_{k = 1}^\infty \sum\limits_{j=1}^{\rk}g_\alpha\kl{\mukj}^2\abs{\ukj^H \,\cdot \kl{ \spr{y_n - y_n^\delta,f_k^n}_{Y_n}}_{n=1}^N }^2  \,,
	\end{equation*}
which together with the Cauchy-Schwarz inequality and  \eqref{eq_SVD_sigma} implies
	\begin{equation*}
		\norm{\xa - \xad}_X^2 
		\leq
		\frac{1}{B_1}
		\max\limits_{k\in \N}\Kl{\sum\limits_{j=1}^{\rk} g_\alpha(\mukj)^2} 
		\sum\limits_{k = 1}^\infty
		\sum\limits_{n=1}^N \abs{ \spr{y_n - y_n^\delta,f_k^n}_{Y_n} }^2   \,.
	\end{equation*}
Now, since the set $\{f_k^n\}_{k \in \N}$ forms a frame over $Y_n$, it follows from \eqref{eq_frame_fk} that
	\begin{equation*}
		\norm{\xa - \xad}_X^2 
		\leq
		(C_2/B_1)
		\max\limits_{k\in \N}\Kl{\sum\limits_{j=1}^{\rk} g_\alpha(\mukj)^2} 
		\norm{y-y^\delta}_Y^2  \,.
	\end{equation*}
Hence, if the function $g_\alpha$ is chosen such that $g_\alpha(\mukj)$ remains bounded for $k \to \infty$, then the approximations $\xad$ depend continuously on the data $\yd$. In fact, we could retrace all steps of the standard convergence analysis of regularization methods for linear inverse problems \cite{Engl_Hanke_Neubauer_1996} to obtain convergence of $\xad$ to $\AD y$ under standard assumptions on $g_\alpha$, thereby extending classic results from the SVD to the frame decomposition.
\end{remark}\vspace{7pt}

\begin{remark}
For the numerical computation of the functions $\AD y$ one needs to be able to evaluate the dual frame functions $\ekt^m$. In most cases, these functions cannot be computed analytically, but one may approximate them using the iterative approximation formula \eqref{eq_dual_N_approx}. Due to \eqref{eq_dual_approx_error} only a small number of iterations are necessary, if the frame bounds $B_1$ and $B_2$ are close to each other. Note that the functions $\ekt^m$ are independent of the actual right hand side $y$ of equation \eqref{Ax=y} and can thus be computed in advance. This is particularly useful if the problem has to be solved multiple times, since then the computed approximations of $\ekt$ can be stored and re-used. Additionally, it is possible that in some situations a frame decomposition might behave better than the SVD.
\end{remark}

\section{Applicability to specific operator classes}\label{sect_discussion}

We now turn our attention to Assumption~\ref{ass_main}, i.e., to the assumption that there exists a sequence $\{\Lk \}_{k\in\N}$ of complex-valued matrices $\Lk = (\lkmn)_{m,n=1}^{M,N}$ such that
	\begin{equation*}
		\kl{\spr{A_n x,f_k^n}_{Y_n}}_{n=1}^N = \Lk \cdot \kl{\spr{x_m,e_k^m}_{X_m}}_{m=1}^M \,,
		\qquad 
		\forall \,k \in \N \,, \,  x = (x_m)_{m=1}^M \in X \,,
	\end{equation*}
and in particular to the task of finding frames which satisfy this. 

First, note that Assumption~\ref{ass_main} can sometimes be satisfied by choosing frames which are suitably adapted to the ``geometric'' structure of the considered problem. As we shall see on some examples in Section~\ref{sect_numerical_results}, operators composed mainly of shifting and scaling operations can often be decomposed using frames derived from exponential bases. These can also sometimes be used to extend existing decompositions over regular domains to irregular domains. The same is also true for differential operators. In addition, wavelet frames can often be suitable, in particular since one can choose from a wide variety of available wavelets with properties such as regularity, vanishing moments, or compact support. This provides a link to the Wavelet-Vaguelett decomposition mentioned above.

Next, we shall see that Assumption~\ref{ass_main} can also be satisfied if one of the following three situations occurs:
\begin{enumerate}
	\item The singular-value decomposition of $A \, : X \to Y$ is known.
	\item The operator $A \, : X \to Y$ satisfies a stability property of the form
		\begin{equation}\label{cond_helper_1}
			c_1 \norm{x}_X \leq \norm{Ax}_Z \leq c_2 \norm{x}_X \,,
			\qquad
			\forall \, x \in X \,,
		\end{equation}	
	with constants $c_1,c_2 > 0$ and some Hilbert space $Z$ being a (dense) subspace of~$Y$. 
	\item The operator $A$ is continuously invertible (i.e.\ it satisfies \eqref{cond_helper_1} with $Z=Y$).
\end{enumerate}

We shall now discuss each of those situations in turn, devoting particular attention to the second (and third) situation in Sections~\ref{sect_smooth_I} and \ref{sect_smooth_II} below. Even though it allowed us to derive more general results in the previous section, for the subsequent considerations we do not need to make explicit use of the general structure \eqref{def_X_Y} of the spaces $X$ and $Y$. Thus, we now consider 
	\begin{equation*}
	\begin{split}
		A \, : \, X \to \,	Y  \,,
		\qquad		
		x  \, \mapsto \, Ax  \,,
	\end{split}
	\end{equation*}
as a special case of \eqref{def_X_Y} with $M=N=1$. In order to keep the notation simple, in what follows we drop all sub- and superscripts related to $M$ and $N$. For example, instead of writing $e_k^1$ and $f_k^1$ for the frame functions, we now simply write $e_k$ and $f_k$, and so on. With this, condition \eqref{cond_frames} in Assumption~\ref{ass_main} reads
	\begin{equation}\label{cond_frames_1D}
		\spr{A x,f_k}_{Y} = \lk \spr{x,e_k}_{X} \,,
		\qquad 
		\forall \,k \in \N \,, \,  x  \in X \,,
	\end{equation}
where $\lk$ now is a sequence of complex numbers instead of matrices. Furthermore, the expressions \eqref{eq_A_dec} and \eqref{def_AD} for the operators $A$ and $\AD$ now respectively read
	\begin{equation}\label{eq_dec_A_AD_1D}
		A x = \sum_{k=1}^\infty \lk \spr{x,e_k} \fkt \,,
		\qquad
		\text{and}
		\qquad
		\AD y = \sum\limits_{\underset{\lk\neq 0}{k=1}}^\infty \frac{\spr{y,f_k}}{\lk}  \ekt \,,
	\end{equation}
and the Picard condition \eqref{cond_Picard} turns into
	\begin{equation}\label{cond_Picard_1D}
		\sum\limits_{\underset{\lk \neq 0}{k=1}}^\infty \frac{\abs{\spr{y,f_k}_{Y} }^2 }{\abs{\lk}^2}
		< 
		\infty \,.
	\end{equation}
Note that if $X$ and $Y$ have a structure of the form \eqref{def_X_Y}, then condition \eqref{cond_frames} is a more general linear relationship between the operator and the frames than condition \eqref{cond_frames_1D}. This is important if none of the three situations introduced above apply, in which case a frame decomposition can still be obtained under condition \eqref{cond_frames} as in Section~\ref{sect_frame_dec}.

\subsection{The SVD as a Frame Decomposition}

Comparing \eqref{eq_dec_A_AD_1D}, \eqref{cond_Picard_1D} with \eqref{def_A_dec}, \eqref{def_Ad}, \eqref{Picard}, the similarities between the frame decomposition and the singular-value decomposition become apparent. In particular, since the singular functions form orthonormal bases of $\ol{R(A)}$ and $N(A)^\perp$, we obtain

\begin{lemma}\label{lem_svd_frames}
Let $(\sk, v_k, u_k)_{k=1}^\infty$ be the singular system of $A$ and let $\{m_k\}_{k\in\N}$ and $\{n_k\}_{k\in\N}$ be orthonormal bases of $N(A)$ and $N(A^*)$, respectively. Then the sets
	\begin{equation}\label{def_frame_svd}
		\{e_k\}_{k\in\N} := \{v_k \}_{k\in \N} \cup \{m_k\}_{k\in\N}
		\qquad
		\text{and}
		\qquad
		\{f_k\}_{k\in\N} := \{u_k \}_{k\in \N} \cup \{n_k\}_{k\in\N}
	\end{equation}
form tight frames with frame bound $1$ over the spaces $X$ and $Y$, respectively. Together with $\{\lk\}_{k\in\N} := \{ \sk \}_{k \in \N} \cup \{ 0 \}_{k \in \N}$, they also satisfy condition \eqref{cond_frames_1D}.
\end{lemma}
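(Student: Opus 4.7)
The plan is to reduce everything to the fact that orthonormal bases are tight frames with bound $1$ (via Parseval's identity), and that condition \eqref{cond_frames_1D} is essentially a reformulation of the defining equations of the singular system together with the definition of $N(A^*)$.

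First, I would argue that $\{e_k\}_{k\in\N}$ is an orthonormal basis of $X$. Since $(\sigma_k,v_k,u_k)_{k=1}^\infty$ is a singular system of $A$, the functions $\{v_k\}_{k\in\N}$ form a complete orthonormal system of $N(A)^\perp$ (which is closed). By assumption, $\{m_k\}_{k\in\N}$ is an orthonormal basis of $N(A)$, and since $X = N(A)^\perp \oplus N(A)$ with orthogonal summands, the union $\{e_k\}_{k\in\N}$ is an orthonormal basis of $X$ (after any enumeration, e.g.\ interleaving the two sequences, or listing $v_k$'s and then $m_k$'s if the latter set is finite). Analogously, $\{u_k\}_{k\in\N}$ is a complete orthonormal system of $\overline{R(A)}$ and $\{n_k\}_{k\in\N}$ is an orthonormal basis of $N(A^*) = R(A)^\perp = \overline{R(A)}^\perp$, hence $\{f_k\}_{k\in\N}$ is an orthonormal basis of $Y$.

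Next, I would invoke Parseval's identity: for every $x \in X$ and every $y \in Y$ one has
\begin{equation*}
\|x\|_X^2 = \sum_{k=1}^\infty \abs{\spr{x,e_k}_X}^2 \,, \qquad \|y\|_Y^2 = \sum_{k=1}^\infty \abs{\spr{y,f_k}_Y}^2 \,,
\end{equation*}
which is exactly the definition of a tight frame with bound $B_1 = B_2 = 1$ (cf.\ Definition~\ref{def_frame}). This covers the first claim of the lemma.

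Finally, I would check \eqref{cond_frames_1D} by enumerating $\{e_k\}_{k\in\N}$, $\{f_k\}_{k\in\N}$, $\{\lambda_k\}_{k\in\N}$ in a coupled manner, so that either $(e_k,f_k,\lambda_k) = (v_j,u_j,\sigma_j)$ or $(e_k,f_k,\lambda_k) = (m_j,n_j,0)$ for some $j$. In the first case, using $A^* u_j = \sigma_j v_j$ (which follows from $A v_j = \sigma_j u_j$ combined with \eqref{def_singular_system}), one obtains
\begin{equation*}
\spr{Ax,f_k}_Y = \spr{Ax, u_j}_Y = \spr{x, A^* u_j}_X = \sigma_j \spr{x, v_j}_X = \lambda_k \spr{x,e_k}_X \,.
\end{equation*}
In the second case, since $n_j \in N(A^*)$, one has $A^* n_j = 0$ and thus
\begin{equation*}
\spr{Ax,f_k}_Y = \spr{x,A^* n_j}_X = 0 = \lambda_k \spr{x,e_k}_X \,,
\end{equation*}
which yields \eqref{cond_frames_1D} and concludes the proof. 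The main (and essentially only) obstacle is bookkeeping the enumeration that pairs up $v_j \leftrightarrow u_j \leftrightarrow \sigma_j$ and $m_j \leftrightarrow n_j \leftrightarrow 0$ inside a single index $k$; the mathematical content is elementary.
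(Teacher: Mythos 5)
Your proof is correct and follows exactly the route the paper intends: the paper's own proof is just the one-line remark that the lemma ``is a direct consequence of the properties of the SVD,'' and your argument (orthonormal bases of $N(A)^\perp\oplus N(A)$ and $\overline{R(A)}\oplus N(A^*)$, Parseval for the tight-frame bound $1$, and the identities $A^*u_j=\sigma_j v_j$, $A^*n_j=0$ for the coupled enumeration) is precisely the detail being elided. Nothing to add.
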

\begin{proof}
This is a direct consequence of the properties of the SVD.
\end{proof}

Due to the above result, it follows that it is possible to find a frame decomposition for any bounded (compact) linear operator $A$, at least in theory. Of course this result is not very practical, since it again involves the SVD, which we initially set out to avoid. However, since by Lemma~\ref{lem_svd_frames} the frames $\{e_k\}_{k\in\N}$ and $\{f_k\}_{k\in\N}$ are tight with frame bound $1$ and thus $\ekt = e_k$ and $\fkt = f_k$, it follows that the results on the frame decomposition derived above are a direct generalization of the classic results on the SVD (compare for example with Corollary~\ref{corollary_main_I}).

\subsection{Stability Property - Part I}\label{sect_smooth_I}

Let us now turn our attention to the second situation, i.e., to the case that $A$ satisfies a stability property of the form \eqref{cond_helper_1}. For this, we start by considering condition \eqref{cond_frames_1D}, which is clearly equivalent to
	\begin{equation}\label{cond_frames_connected}
		\lkc \, e_k =  \, A^*f_k \,,
		\qquad
		\forall \,k \in \N \,,
	\end{equation}
with $\lkc$ denoting the complex conjugate of $\lk$, and is reminiscent of the connection \eqref{def_singular_system} between the singular values and functions. This suggests the following strategy for finding frames that fulfil \eqref{cond_frames_connected}: Starting with some frame $\{f_k\}_{k\in\N}$, one simply defines
	\begin{equation}\label{def_ek_frame}
		e_k := \frac{1}{\lkc} A^* f_k \,, 
	\end{equation}
for some sequence of coefficients $\{\lk\}_{k \in \N}$. If one can choose the $\lk$ in such a way that the resulting set $\{e_k\}_{k\in\N}$ forms a frame over $X$, then all assumptions in Theorem~\ref{thm_main_I} are satisfied and thus the results on the frame decomposition derived above hold. As we are going see now, this is possible if the operator $A$ has a specific stability property, which we now introduce in the following

\begin{assumption}\label{ass_main_II}
The operator $A : X \to Y$ satisfies the condition
	\begin{equation}\label{cond_A_stability}
		c_1 \norm{x}_X \leq \norm{Ax}_Z \leq c_2 \norm{x}_X \,,
		\qquad
		\forall \, x \in X \,,
	\end{equation}	
for some constants $c_1,c_2 > 0$, where $Z \subseteq Y$ is a Hilbert space. Furthermore, there exists a sequence of coefficients $0 \neq \ak \in \R$ and some constants $a_1,a_2 > 0$ such that
	\begin{equation}\label{cond_norm_Z}
		a_1 \norm{y}_Z^2 
		\leq \sum_{k=1}^\infty \ak^2  \abs{\spr{y,f_k}_Y}^2 
		\leq a_2 \norm{y}_Z^2
		 \,,
		\qquad
		\forall \, y \in Y \,,
	\end{equation} 
where as before the functions $f_k$ are such that the set $\{f_k\}_{k\in\N}$ forms a frame over $Y$.
\end{assumption}

Condition \eqref{cond_A_stability} is satisfied for many operators of practical relevance, the most prominent example perhaps being the Radon transform (compare with Section~\ref{sect_numerical_results} below). It implies that $A$ is injective, and that as an operator from $X$ to $Z$ it is continuously invertible. However, in the presence of noise the right-hand side $y$ in equation  \eqref{Ax=y} typically only belongs to $Y$ but not to $Z$. Condition \eqref{cond_norm_Z} is for example satisfied if $Y$ and $Z$ are Sobolev spaces and $\{f_k\}_{k\in\N}$ is a suitably chosen wavelet or exponential frame/basis (again see Section~\ref{sect_numerical_results} below). We now proceed to derive the following 
\begin{proposition}\label{prop_stability}
Let $A: X \to Y$ be a bounded linear operator and let Assumption~\ref{ass_main_II} hold. Furthermore, let the functions $e_k$ be defined by \eqref{def_ek_frame}, where the parameters $\lk \in \C$ are chosen such that
	\begin{equation}\label{cond_sigma_k_bounds}
		0 <  b_1 \leq \ak \abs{\lk} \leq  \, b_2 < \infty \,, 
		\qquad
		\forall \, k \in \N \,,
	\end{equation} 
for some constants $b_1, b_2 > 0$. Then the set $\{e_k\}_{k\in \N}$ forms a frame over $X$ with frame bounds $B_1 = a_1 (c_1/b_2)^2 $ and $B_2 = a_2 (c_2/b_1)^2 $.
\end{proposition}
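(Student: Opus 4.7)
The plan is a direct calculation verifying the frame inequality \eqref{eq_frame} for $\{e_k\}_{k\in\N}$. The core observation is that by the definition $e_k = (1/\lkc) A^* f_k$, for any $x \in X$ we have
	\begin{equation*}
		\spr{x,e_k}_X = \frac{1}{\lk}\spr{x,A^*f_k}_X = \frac{1}{\lk}\spr{Ax,f_k}_Y \,,
	\end{equation*}
so that $|\spr{x,e_k}_X|^2 = |\lk|^{-2}|\spr{Ax,f_k}_Y|^2$. Summing over $k$, the task reduces to bounding $\sum_k |\lk|^{-2}|\spr{Ax,f_k}_Y|^2$ from above and below by a constant times $\norm{x}_X^2$.

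The next step is to convert the factor $|\lk|^{-2}$ into $\ak^2$ up to the prescribed multiplicative constants. From \eqref{cond_sigma_k_bounds} one reads off
	\begin{equation*}
		\frac{\ak^2}{b_2^2} \leq \frac{1}{\abs{\lk}^2} \leq \frac{\ak^2}{b_1^2} \,,
		\qquad \forall \, k \in \N \,,
	\end{equation*}
which inserted into the sum yields
	\begin{equation*}
		\frac{1}{b_2^2}\sum_{k=1}^\infty \ak^2 \abs{\spr{Ax,f_k}_Y}^2
		\leq \sum_{k=1}^\infty \abs{\spr{x,e_k}_X}^2
		\leq \frac{1}{b_1^2}\sum_{k=1}^\infty \ak^2 \abs{\spr{Ax,f_k}_Y}^2 \,.
	\end{equation*}

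From here I would apply \eqref{cond_norm_Z} with $y = Ax \in Z \subseteq Y$ to replace the middle sum in each bound by the corresponding multiple of $\norm{Ax}_Z^2$, obtaining $(a_1/b_2^2)\norm{Ax}_Z^2$ on the left and $(a_2/b_1^2)\norm{Ax}_Z^2$ on the right. Finally, the stability property \eqref{cond_A_stability} gives $c_1^2\norm{x}_X^2 \leq \norm{Ax}_Z^2 \leq c_2^2 \norm{x}_X^2$, and combining these estimates produces exactly
	\begin{equation*}
		\frac{a_1 c_1^2}{b_2^2}\norm{x}_X^2
		\leq \sum_{k=1}^\infty \abs{\spr{x,e_k}_X}^2
		\leq \frac{a_2 c_2^2}{b_1^2}\norm{x}_X^2 \,,
	\end{equation*}
which is the claimed frame inequality with $B_1 = a_1(c_1/b_2)^2$ and $B_2 = a_2(c_2/b_1)^2$.

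There is no real obstacle here beyond bookkeeping. The structure of Assumption~\ref{ass_main_II} was clearly designed so that \eqref{cond_A_stability} and \eqref{cond_norm_Z} chain together: the first converts $\norm{x}_X$ to $\norm{Ax}_Z$, and the second converts $\norm{Ax}_Z$ into the weighted sum that, after rescaling by $\ak^2$, matches the frame coefficients of $x$ with respect to $\{e_k\}_{k\in\N}$. The only point requiring mild care is that both inequalities in \eqref{cond_sigma_k_bounds} must be strict and uniform in $k$ so that the rescaling by $\ak^2/|\lk|^{-2}$ is controlled from both sides; this is exactly what the hypothesis $0 < b_1 \leq \ak|\lk| \leq b_2 < \infty$ supplies.
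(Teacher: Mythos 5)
Your proof is correct and follows essentially the same route as the paper's: both arguments reduce $\abs{\spr{x,e_k}_X}^2$ to $\abs{\lk}^{-2}\abs{\spr{Ax,f_k}_Y}^2$, use \eqref{cond_sigma_k_bounds} to trade $\abs{\lk}^{-2}$ for $\ak^2$ up to $b_1,b_2$, and then chain \eqref{cond_norm_Z} with \eqref{cond_A_stability}. The only difference is the order in which the three inequalities are applied, which is immaterial.
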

\begin{proof}
Let $x \in X$ be arbitrary but fixed. Due to \eqref{cond_A_stability}  and \eqref{cond_norm_Z} there holds
	\begin{equation}\label{eq_proof_helper_1}
	\begin{split}
		a_1 c_1^2 \norm{x}_X^2
		\overset{\eqref{cond_A_stability}}{\leq}  
		a_1 \norm{Ax}_Z^2
		\overset{\eqref{cond_norm_Z}}{\leq} 
		\sum_{k=1}^\infty \ak^{2} \abs{\spr{Ax,f_k}_Y}^2
		\overset{\eqref{cond_norm_Z}}{\leq}
		a_2  \norm{Ax}_Z^2 
		\overset{\eqref{cond_A_stability}}{\leq} 
		a_2 c_2^2 \norm{x}_X^2 \,.  		
	\end{split}
	\end{equation}	
Furthermore, it follows from \eqref{cond_sigma_k_bounds} that
	\begin{equation}\label{eq_proof_helper_2}
		b_2^{-2}\sum_{k=1}^\infty \ak^{2} \abs{\spr{Ax,f_k}_Y}^2 
		\leq
		\sum_{k=1}^\infty \abs{\lk}^{-2} \abs{\spr{Ax,f_k}_Y}^2 
		\leq
		b_1^{-2}\sum_{k=1}^\infty \ak^{2} \abs{\spr{Ax,f_k}_Y}^2 \,, 
	\end{equation}	
and thus together with \eqref{eq_proof_helper_1} we obtain
	\begin{equation}\label{eq_proof_helper_3}
		a_1 (c_1/b_2)^2 \norm{x}_X^2 
		\leq 
		\sum_{k=1}^\infty \abs{\lk}^{-2} \abs{\spr{Ax,f_k}_Y}^2 
		\leq
		a_2 c_2^2 \norm{x}_X^2 \,. 
	\end{equation}				
Now since by the definition of $e_k$ there holds
	\begin{equation}\label{eq_proof_helper_4}
		\sum_{k=1}^\infty \abs{\spr{x,e_k}_X}^2
		=
		\sum_{k=1}^\infty \abs{\spr{x,\lk^{-1} A^*f_k}_X}^2
		=
		\sum_{k=1}^\infty \abs{\lk}^{-2} \abs{\spr{Ax,f_k}_Y}^2 \,,
	\end{equation}
it follows from \eqref{eq_proof_helper_3} that
	\begin{equation*}
		a_1 (c_1/b_2)^2 \norm{x}_X^2
		\leq
		\sum_{k=1}^\infty \abs{\spr{x,e_k}_X}^2
		\leq
		a_2 (c_2/b_1)^2\norm{x}_X^2
		\,,
	\end{equation*}
which shows that $\{e_k\}_{k\in \N}$ forms a frame over $X$ and thus yields the assertion.
\end{proof}

Using the above proposition, we can now derive the third main result of this paper: 
\begin{theorem}\label{thm_main_III}
Let $A: X \to Y$ be a bounded linear operator and let Assumption~\ref{ass_main_II} hold. Furthermore, let the functions $e_k$ be defined by \eqref{def_ek_frame}, where the parameters $\lk \in \C$ are chosen such that \eqref{cond_sigma_k_bounds} holds. Then for any $y \in Z$ the function $\AD y$ as defined in \eqref{eq_dec_A_AD_1D} is the unique solution of \eqref{Ax=y}. Among all possible decompositions of $\AD y$ in terms of the dual frame functions $\ekt$, decomposition~\eqref{eq_dec_A_AD_1D} is the most economical one in the sense of Proposition~\ref{prop_frame_min}.
\end{theorem}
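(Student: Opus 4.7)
The plan is to verify all hypotheses of Theorem~\ref{thm_main_I} in its 1D specialization ($M = N = 1$) and then quote it. First, I would observe that the very definition $e_k := (1/\overline{\lk})\,A^*f_k$ turns Assumption~\ref{ass_main} into an identity: for every $x \in X$ and $k \in \N$,
$$\spr{Ax, f_k}_Y = \spr{x, A^*f_k}_X = \spr{x, \overline{\lk}\, e_k}_X = \lk \spr{x, e_k}_X,$$
which is exactly \eqref{cond_frames_1D}. Thus the scalar version of \eqref{cond_frames} holds automatically by construction of $e_k$.

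Second, Proposition~\ref{prop_stability} already supplies the crucial fact that $\{e_k\}_{k\in\N}$ is a frame over $X$ with explicit bounds expressed in terms of $a_1,a_2,b_1,b_2,c_1,c_2$, while the frame $\{f_k\}_{k\in\N}$ on $Y$ is given by Assumption~\ref{ass_main_II}. In the 1D setting the matrices $\Lk$ collapse to the scalars $\lk$, and the bound $0 < b_1 \leq \ak \abs{\lk}$ in \eqref{cond_sigma_k_bounds} together with $\ak \neq 0$ forces $\lk \neq 0$ for every $k$; consequently $N(\Lk) = \Kl{0}$ for all $k$, which is the injectivity hypothesis of Theorem~\ref{thm_main_I}.

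Third, I would verify $y \in R(A)$. The stability bound \eqref{cond_A_stability} makes $A : X \to Z$ boundedly invertible, so every $y \in Z$ has the form $y = A\bar{x}$ for a unique $\bar{x} \in X$, placing $y$ in $R(A)$. With all hypotheses of Theorem~\ref{thm_main_I} met, its conclusion applies directly and delivers both assertions of the present theorem: $\AD y$ is the unique solution of \eqref{Ax=y}, and the expansion \eqref{eq_dec_A_AD_1D} is the most economical decomposition of $\AD y$ in the dual frame $\Kl{\ekt}_{k\in\N}$ in the sense of Proposition~\ref{prop_frame_min}.

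The genuine analytical content is carried entirely by Proposition~\ref{prop_stability}; Theorem~\ref{thm_main_III} itself is a packaging statement, and the proof will be a short reduction. The only point deserving care is the step "$y \in Z \Rightarrow y \in R(A)$", which relies on reading \emph{continuously invertible from $X$ to $Z$} in Assumption~\ref{ass_main_II} as surjectivity onto $Z$ rather than mere boundedness below. If one wished to remain agnostic about this, one could instead restrict the conclusion to $y \in R(A) \cap Z$; under the intended interpretation, however, the proof is immediate.
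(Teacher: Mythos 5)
Your proposal is correct and follows essentially the same route as the paper: the paper's proof likewise invokes Proposition~\ref{prop_stability} for the frame property of $\{e_k\}_{k\in\N}$, notes that \eqref{cond_sigma_k_bounds} forces $\lk\neq 0$, claims $R(A)=Z$ from \eqref{cond_A_stability}, and then applies Theorem~\ref{thm_main_I}. The one caveat you raise --- that the two-sided estimate \eqref{cond_A_stability} by itself only yields that $R(A)$ is a \emph{closed} subspace of $Z$, not that $A$ is surjective onto $Z$ --- is a genuine subtlety that the paper's own proof also glosses over by simply asserting $R(A)=Z$, so your careful flagging of it is a point in your favour rather than a defect of your argument.
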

\begin{proof}
Due to Proposition~\ref{prop_stability} the set $\{e_k\}_{k\in\N}$ forms a frame over $X$. Furthermore, it follows from \eqref{cond_sigma_k_bounds} that $\lk \neq 0$ for all $k \in \N$. Moreover, since due to \eqref{cond_A_stability} there holds $R(A) = Z$, it follows that \eqref{Ax=y} is (uniquely) solvable for any $y \in Z$. Hence, all conditions of Theorem~\ref{thm_main_I} are satisfied, which yields the assertions
\end{proof}

\begin{remark}
Due to \eqref{cond_norm_Z} and \eqref{cond_sigma_k_bounds} it follows that
	\begin{equation*}
		a_1 b_2^{-2}\norm{y}_Z^2
		\leq
		\sum_{k=1}^\infty \frac{\abs{\spr{y,f_k}_Y}^2}{\abs{\lk}^2}
		\leq 
		a_2 b_1^{-2}\norm{y}_Z^2  \,.
	\end{equation*}
This implies that the Picard condition \eqref{cond_Picard_1D} holds if and only if $\norm{y}_Z < \infty$, i.e.\ if $y \in Z$. Hence, it also follows that $\AD y$ is well-defined for any $y \in Z$. This should be compared to the definition space of the Moore-Penrose inverse $\Ad y$, for which there holds \cite{Engl_Hanke_Neubauer_1996}
	\begin{equation*}
		D(\Ad) = R(A) + R(A)^\perp \,.
	\end{equation*}
Note first that from \eqref{cond_A_stability} it follows that $R(A) = Z$. Now since $Z$ is typically a dense subspace of $Y$, it follows that $Z^\perp = 0$, and thus we get that $D(\Ad) = Z \subseteq D(\AD)$. Hence, the definition space of $\AD$ is at least as large as the definition space of $\Ad$.
\end{remark}\vspace{7pt}

A number of simplifications of the above theory are possible if the operator $A$ is continuously invertible. This is because in that case condition \eqref{cond_A_stability} is satisfied for the choice $Z = Y$. Consequently, also condition \eqref{cond_norm_Z} is satisfied for any frame $\{f_k\}_{k\in\N}$ over $Y$ together with $\ak = 1$. Hence, we can obtain the following

\begin{theorem}\label{thm_main_IIII}
Let $A: X \to Y$ be a bounded and continuously invertible linear operator and let $\{f_k\}_{k\in \N}$ form a frame over $Y$. Furthermore, let the functions $e_k$ be defined by \eqref{def_ek_frame}, where the parameters $\lambda_k \in \C$ are such that for some constants $b_1, b_2 > 0$ 
	\begin{equation}\label{eq_bounds_sk_Ainv}
		0 <  b_1 \leq \abs{\lk} \leq  \, b_2 < \infty \,, 
		\qquad
		\forall \, k \in \N \,.
	\end{equation}  
Then for any $y \in Y$ the function $\AD y$ as defined in \eqref{eq_dec_A_AD_1D} is the unique solution of \eqref{Ax=y}. Among all possible decompositions of $\AD y$ in terms of the dual frame functions $\ekt$, decomposition~\eqref{eq_dec_A_AD_1D} is the most economical one in the sense of Proposition~\ref{prop_frame_min}.
\end{theorem}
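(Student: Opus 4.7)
The plan is to recognise Theorem~\ref{thm_main_IIII} as a direct specialisation of Theorem~\ref{thm_main_III} to the situation $Z = Y$ with trivial weights $\ak \equiv 1$. The argument therefore reduces to three short verifications followed by one invocation, and no genuinely new estimate is needed.

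First, I would verify the stability property \eqref{cond_A_stability} with $Z := Y$. The upper bound follows from boundedness of $A$ with $c_2 := \norm{A}$. For the lower bound, continuous invertibility supplies a bounded inverse $A^{-1} : Y \to X$, and writing $x = A^{-1}(Ax)$ gives $\norm{x}_X \leq \norm{A^{-1}}\norm{Ax}_Y$, so one may take $c_1 := 1/\norm{A^{-1}} > 0$. Note that only continuous invertibility (not merely injectivity with dense range) yields the positive constant $c_1$; this is the one place where the hypothesis is really used.

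Next, I would check \eqref{cond_norm_Z}. With the choice $\ak := 1$ for all $k \in \N$ and $Z = Y$, condition \eqref{cond_norm_Z} reduces verbatim to the frame inequality \eqref{eq_frame} for the set $\{f_k\}_{k\in\N}$, which holds by assumption with $a_1, a_2$ given by the frame bounds $C_1, C_2$ of $\{f_k\}_{k\in\N}$. Likewise, with $\ak \equiv 1$, the coefficient bound \eqref{cond_sigma_k_bounds} coincides with the assumed bound \eqref{eq_bounds_sk_Ainv} on $\abs{\lk}$. Hence Assumption~\ref{ass_main_II} is satisfied in its entirety and the frame functions $e_k$ defined via \eqref{def_ek_frame} form a frame over $X$ by Proposition~\ref{prop_stability}.

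Finally, I would invoke Theorem~\ref{thm_main_III}. Since $Z = Y$, every $y \in Y$ automatically lies in $Z$, so Theorem~\ref{thm_main_III} applies to all $y \in Y$ and yields both that $\AD y$ defined by \eqref{eq_dec_A_AD_1D} is the unique solution of \eqref{Ax=y} and that \eqref{eq_dec_A_AD_1D} is the most economical expansion in the dual frame functions $\ekt$ in the sense of Proposition~\ref{prop_frame_min}. There is no essential obstacle; the proof is a one-line specialisation whose only substantive content is the identification $c_1 = 1/\norm{A^{-1}}$ in step one.
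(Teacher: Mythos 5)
Your proposal is correct and follows essentially the same route as the paper's own proof: verify \eqref{cond_A_stability} with $Z=Y$, note that \eqref{cond_norm_Z} reduces to the frame inequality for $\{f_k\}_{k\in\N}$ with $\ak\equiv 1$, check \eqref{cond_sigma_k_bounds}, and invoke Theorem~\ref{thm_main_III}. Your explicit identification $c_1 = 1/\norm{A^{-1}}$ is a small added detail the paper leaves implicit, but the argument is the same.
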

\begin{proof}
Since $A$ is a bounded and continuously invertible operator it satisfies \eqref{cond_A_stability} for $Z = Y$. Furthermore, since $\{f_k\}_{k\in\N}$ forms a frame over $Y$, it follows that also \eqref{cond_norm_Z} is satisfied for $Z = Y$ and $\ak = 1$. Moreover, due to \eqref{eq_bounds_sk_Ainv} also condition \eqref{cond_sigma_k_bounds} holds. Hence, Theorem~\ref{thm_main_III} is applicable, which yields the assertion.
\end{proof}

\subsection{Stability Property - Part II}\label{sect_smooth_II}

Next, we turn our attention to a slightly different way of deriving a frame decomposition for operators satisfying the stability property \eqref{cond_A_stability}. This approach can be used even if no frame $\{f_k\}_{k\in\N}$ satisfying \eqref{cond_norm_Z} is known or (numerically) feasible. All one needs is that the functions $f_k$ are elements of the subspace $Z$, albeit at the cost of a (numerically) more involved determination of a suitable frame $\{e_k\}_{k\in\N}$. This leads us to the following

\begin{assumption}\label{ass_main_III}
The operator $A : X \to Y$ satisfies condition \eqref{cond_A_stability}, i.e,
	\begin{equation*}
		c_1 \norm{x}_X \leq \norm{Ax}_Z \leq c_2 \norm{x}_X \,,
		\qquad
		\forall \, x \in X \,, 
	\end{equation*}	
for some constants $c_1,c_2 > 0$, where the Hilbert space $Z \subseteq Y$ is a dense subspace of $Y$, for which there holds
	\begin{equation}\label{cond_Y_Z}
		\norm{y}_Y \leq \norm{y}_Z  \,, 
		\qquad \forall \, y \in Z \,.
	\end{equation} 
Furthermore, the functions $f_k$ are such that the set $\{f_k\}_{k\in\N}$ forms a frame over $Y$ with frame bounds $C_1,C_2>0$. Additionally, these $f_k$ are elements of $Z$, i.e.,  $\norm{f_k}_Z < \infty$.
\end{assumption}

It is known (see e.g.~\cite{Engl_Hanke_Neubauer_1996}) that if \eqref{cond_Y_Z} holds then there exists a densely defined, unbounded, selfadjoint, strictly positive operator $L$ with $D(L) = Z$ such that 
	\begin{equation}\label{eq_L_Y_Z}
		\norm{L y}_Y = \norm{y}_Z \,, 
		\qquad \forall \, y \in Z \,.
	\end{equation}
This operator $L$ is uniquely determined by $L = (E E^*)^{-1/2}$, where $E:Z \to Y$ denotes the embedding operator. With this, we can proceed to derive the following 

\begin{lemma}\label{lem_ek_L_frame}
Let $A : X \to Y$ be a bounded linear operator and let Assumption~\ref{ass_main_III} hold. Then the set $\{e_k\}_{k\in \N}$, where the functions $e_k$ are defined as
	\begin{equation}\label{def_ek_L}
		e_k := A^*L f_k \,,
	\end{equation}
form a frame over $X$ with frame bounds $B_1= c_1^2\, C_1$ and $B_2 = c_2^2 \,C_2$, where $C_1$ and $C_2$ are the frame bounds of $\{f_k\}_{k\in\N}$, and $c_1$ and $c_2$ are as in Assumption~\ref{ass_main_III}.
\end{lemma}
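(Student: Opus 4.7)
The plan is to verify the frame inequalities for $\{e_k\}_{k \in \N}$ directly by rewriting the inner products $\spr{x,e_k}_X$ in a form to which the frame property of $\{f_k\}_{k \in \N}$ can be applied, and then chaining the relevant norm equivalences.

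First I would check that the $e_k$ are well-defined elements of $X$. Since by Assumption~\ref{ass_main_III} the functions $f_k$ lie in $Z = D(L)$, the elements $Lf_k \in Y$ are defined, and boundedness of $A^*$ gives $e_k = A^*Lf_k \in X$. Next, for arbitrary $x \in X$ I would compute
\begin{equation*}
	\spr{x,e_k}_X = \spr{x, A^*L f_k}_X = \spr{Ax, Lf_k}_Y \,.
\end{equation*}
The key identity is then to move $L$ to the other side using its selfadjointness. This is justified because $Ax \in Z = D(L)$ by the stability condition \eqref{cond_A_stability}, so that
\begin{equation*}
	\spr{Ax, Lf_k}_Y = \spr{LAx, f_k}_Y \,.
\end{equation*}

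With this identity in hand, the frame property \eqref{eq_frame} of $\{f_k\}_{k\in\N}$ applied to $LAx \in Y$ yields
\begin{equation*}
	C_1 \norm{LAx}_Y^2 \leq \sum\limits_{k=1}^\infty \abs{\spr{LAx, f_k}_Y}^2 \leq C_2 \norm{LAx}_Y^2 \,.
\end{equation*}
Using \eqref{eq_L_Y_Z} to replace $\norm{LAx}_Y$ by $\norm{Ax}_Z$, and then invoking the stability property \eqref{cond_A_stability} to sandwich $\norm{Ax}_Z$ between $c_1\norm{x}_X$ and $c_2\norm{x}_X$, I obtain
\begin{equation*}
	c_1^2 C_1 \norm{x}_X^2 \leq \sum\limits_{k=1}^\infty \abs{\spr{x,e_k}_X}^2 \leq c_2^2 C_2 \norm{x}_X^2 \,,
\end{equation*}
which is exactly the frame inequality with bounds $B_1 = c_1^2 C_1$ and $B_2 = c_2^2 C_2$.

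The only delicate point is the justification of the selfadjointness step $\spr{Ax, Lf_k}_Y = \spr{LAx, f_k}_Y$: this requires both $Ax$ and $f_k$ to lie in $D(L)$, which is ensured by \eqref{cond_A_stability} and by the assumption $\norm{f_k}_Z < \infty$, respectively. Everything else is a routine chain of estimates, so I expect no further obstacles.
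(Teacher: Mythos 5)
Your proof is correct and follows essentially the same route as the paper's: rewrite $\spr{x,e_k}_X=\spr{LAx,f_k}_Y$ via selfadjointness of $L$ (using $Ax\in Z=D(L)$ from \eqref{cond_A_stability} and $f_k\in Z$), apply the frame inequality for $\{f_k\}_{k\in\N}$ to $LAx$, convert $\norm{LAx}_Y$ to $\norm{Ax}_Z$ via \eqref{eq_L_Y_Z}, and finish with the stability estimate. The well-definedness check and the attention to the domain condition for the selfadjointness step match the paper's argument as well.
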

\begin{proof}
First of all, due to Assumption~\ref{ass_main_III} it follows that
	\begin{equation*}
		\norm{e_k}_X = \norm{A^*Lf_k}_X \leq \norm{A^*} \norm{L f_k}_Y  = \norm{A} \norm{f_k}_Z < \infty \,, 
	\end{equation*}
and thus the functions $e_k$ are well-defined. Let now $x \in X$ be arbitrary but fixed and note that due to \eqref{cond_A_stability} there holds $Ax \in Z = D(L)$. First, since $L$ is selfadjoint we get
	\begin{equation}\label{eq_helper_3}
		\sum_{k=1}^{\infty} \abs{\spr{x,e_k}_X}^2 
		= \sum_{k=1}^{\infty} \abs{\spr{x,A^*Lf_k}_X}^2 
		= \sum_{k=1}^{\infty} \abs{\spr{L A x, f_k}_Y}^2 \,.
	\end{equation}
Furthermore, since the set $\{f_k\}_{k\in\N}$ forms a frame over $Y$ with frame bounds $C_1,C_2$, it follows with \eqref{eq_L_Y_Z} that
	\begin{equation*}
	\begin{split}
		C_1 \norm{A x}_Z^2
		\overset{\eqref{eq_L_Y_Z}}{=}
		C_1 \norm{L A x}_Y^2
		\leq
		\sum_{k=1}^{\infty} \abs{\spr{L A x, f_k}_Y}^2 
		\leq C_2 \norm{L A x}_Y^2
		\overset{\eqref{eq_L_Y_Z}}{=}
		C_2 \norm{Ax}_Z^2 \,.
	\end{split}
	\end{equation*}
which combined with \eqref{eq_helper_3} yields
	\begin{equation}\label{eq_helper_2}
	\begin{split}
		C_1 \norm{A x}_Z^2
		\leq
		\sum_{k=1}^{\infty} \abs{\spr{x,e_k}_X}^2
		\leq
		C_2 \norm{Ax}_Z^2 \,.
	\end{split}
	\end{equation}
Hence, together with \eqref{cond_A_stability} we obtain
	\begin{equation*}
		c_1^2 \, C_1 \norm{x}_X^2 
		\overset{\eqref{cond_A_stability}}{\leq}
		C_1 \norm{A x}_Z^2 
		\overset{\eqref{eq_helper_2}}{\leq} 
		\sum_{k=1}^{\infty} \abs{\spr{x,e_k}_X}^2 
		\overset{\eqref{eq_helper_2}}{\leq} 
		C_2 \norm{A x}_Z^2 
		\overset{\eqref{cond_A_stability}}{\leq} 
		c_2^2 \, C_2 \norm{x}_X^2 \,,
	\end{equation*}	
which yields the assertion.
\end{proof}

Instead of the original problem  \eqref{Ax=y} we now consider the ``preconditioned'' equation
	\begin{equation}\label{LAx=Ly}
		L A x = L y \,.
	\end{equation}
If condition \eqref{cond_A_stability} holds, then the concatenated operator $L A$ is bounded, linear, and continuously invertible from $X$ to $Y$. Hence, both problems \eqref{LAx=Ly} and \eqref{Ax=y} are uniquely solvable if and only if $Ly \in Y$, which due to \eqref{eq_L_Y_Z} is equivalent to $y \in Z$. For this case, we want to derive an expression of the solution in terms of the frames $\{e_k\}_{k\in\N}$ and $\{f_k\}_{k\in\N}$. We start by giving a frame decomposition of the operator $LA$ in the following

\begin{lemma}\label{lem_dec_LA}
Let $A : X \to Y$ be a bounded linear operator and let Assumption~\ref{ass_main_III} hold. Furthermore, let the functions $e_k$ be defined as in \eqref{def_ek_L}. Then for all $x \in X$ there holds
	\begin{equation}\label{eq_LAx}
		\spr{LAx,f_k}_Y = \spr{x,e_k}_X \,,
	\end{equation}
and consequently
	\begin{equation}\label{eq_dec_LA}
		L A x = \sum_{k=1}^\infty \spr{x,e_k}_X \fkt \,. 
	\end{equation}	
\end{lemma}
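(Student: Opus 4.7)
The plan is to establish the pointwise identity \eqref{eq_LAx} first, by pushing $L$ across the inner product using its selfadjointness, and then to derive the expansion \eqref{eq_dec_LA} simply by applying the dual-frame reconstruction formula for $\{f_k\}_{k\in\N}$ to the element $LAx \in Y$.

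Before the computation, the domain bookkeeping for the unbounded operator $L$ should be checked: the stability estimate \eqref{cond_A_stability} guarantees $Ax \in Z = D(L)$, so that $LAx \in Y$ is meaningful; and by Assumption~\ref{ass_main_III} the frame functions satisfy $f_k \in Z = D(L)$, so that $Lf_k \in Y$ and hence $e_k = A^*Lf_k \in X$ via \eqref{def_ek_L}. With these inclusions in hand, starting from the definition of $e_k$ one computes
\begin{equation*}
\spr{x,e_k}_X \,=\, \spr{x, A^* L f_k}_X \,=\, \spr{Ax, L f_k}_Y \,=\, \spr{L A x, f_k}_Y,
\end{equation*}
where the second equality is the adjointness relation for $A$, and the third equality is the selfadjointness of $L$ applied to the elements $Ax, f_k \in D(L)$. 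This is exactly \eqref{eq_LAx}.

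For the decomposition \eqref{eq_dec_LA}, since $LAx \in Y$ and $\{f_k\}_{k\in\N}$ forms a frame over $Y$ with dual frame $\{\fkt\}_{k\in\N}$, the reconstruction formula \eqref{eq_frame_rec} immediately gives
\begin{equation*}
L A x \,=\, \sum_{k=1}^\infty \spr{L A x, f_k}_Y \,\fkt,
\end{equation*}
into which one substitutes \eqref{eq_LAx} term by term to obtain \eqref{eq_dec_LA}. No substantive obstacle is expected here; the only point requiring care is the interchange of $L$ across the inner product, which is justified once the domain conditions $Ax, f_k \in Z = D(L)$ are noted. Everything else is a direct consequence of results already established in Section~\ref{sect_Frames}.
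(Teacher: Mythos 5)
Your proof is correct and follows essentially the same route as the paper's: both establish \eqref{eq_LAx} by unwinding the definition $e_k = A^*Lf_k$ via the adjointness of $A$ and the selfadjointness of $L$, and then invoke the dual-frame reconstruction formula \eqref{eq_frame_rec} for $LAx$. Your additional domain bookkeeping ($Ax \in Z = D(L)$ and $f_k \in D(L)$) is a welcome explicit justification of a step the paper leaves implicit.
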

\begin{proof}
Due to the definition \eqref{def_ek_L} of the functions $e_k$ there holds
	\begin{equation*}
		\spr{x,e_k}_X  = \spr{x,A^* Lf_k}_X = \spr{LAx,f_k}_Y  \,.
	\end{equation*}
Hence, since the set $\{f_k\}_{k\in\N}$ forms a frame, equation \eqref{eq_dec_LA} now follows from \eqref{eq_frame_rec}.
\end{proof}

Note that by applying the well-defined inverse operator $L^{-1} = (EE^*)^{1/2}$ to \eqref{eq_dec_LA} we also obtain an expression for the operator $A$, namely
	\begin{equation*}
		A x = L^{-1} \kl{\sum_{k=1}^\infty \spr{x,e_k}_X  \fkt }\,. 
	\end{equation*}
Next, we proceed to make the following
\begin{definition}
The operator $\ADb : Y \to X$ is defined by 
	\begin{equation}\label{def_ADt}
		\ADb y = \sum_{k=1}^\infty \spr{Ly ,f_k}_Y \ekt \,.
	\end{equation}
\end{definition}
For this operator $\ADb$, we can derive the following well-definedness result:
\begin{lemma}\label{lem_ADb_bounded}
Let $A:X \to Y$ be a bounded linear operator satisfying Assumption~\ref{ass_main_III}. Furthermore, let $e_k$ be defined as \eqref{def_ek_L}. Then for any $y \in Z$ the function $\ADb y$ given in \eqref{def_ADt} is a well-defined element of $X$.
\end{lemma}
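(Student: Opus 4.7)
The plan is to show $\ADb y \in X$ by checking that the coefficient sequence $\{\spr{Ly,f_k}_Y\}_{k\in\N}$ lies in $\ell_2(\N)$ and that the dual frame $\{\ekt\}_{k\in\N}$ has a bounded synthesis operator, so that $\ADb y$ is the image of a square-summable sequence under a bounded operator.

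First I would use condition \eqref{eq_L_Y_Z} to note that for $y\in Z = D(L)$ the element $Ly$ lies in $Y$ with $\norm{Ly}_Y = \norm{y}_Z<\infty$. Since $\{f_k\}_{k\in\N}$ is a frame over $Y$ with upper bound $C_2$, the upper frame inequality immediately yields
\begin{equation*}
  \sum_{k=1}^\infty \abs{\spr{Ly,f_k}_Y}^2 \,\leq\, C_2\,\norm{Ly}_Y^2 \,=\, C_2\,\norm{y}_Z^2 \,<\, \infty,
\end{equation*}
so the sequence of expansion coefficients is in $\ell_2(\N)$.

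Next I would invoke Lemma~\ref{lem_ek_L_frame}, which guarantees that $\{e_k\}_{k\in\N}$ is a frame over $X$ with bounds $B_1 = c_1^2 C_1$ and $B_2 = c_2^2 C_2$. By the general theory reviewed in Section~\ref{sect_Frames}, the associated dual frame $\{\ekt\}_{k\in\N}$ is then a frame with bounds $1/B_2,\,1/B_1$, and its synthesis operator $\Ft^*:\ell_2(\N)\to X$, given by $\{a_k\}_{k\in\N}\mapsto \sum_k a_k \ekt$, is bounded with $\norm{\Ft^*} \leq \sqrt{1/B_1}$ by \eqref{eq_bound_F_Fadj}.

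Combining the two estimates, the series defining $\ADb y$ converges in $X$ and satisfies
\begin{equation*}
  \norm{\ADb y}_X \,\leq\, \frac{1}{\sqrt{B_1}}\,\kl{\sum_{k=1}^\infty \abs{\spr{Ly,f_k}_Y}^2}^{1/2} \,\leq\, \sqrt{\frac{C_2}{c_1^2 C_1}}\,\norm{y}_Z \,<\, \infty,
\end{equation*}
which gives the assertion. No step looks delicate here; the only thing to be careful about is to invoke $y\in Z$ precisely where needed (to make $Ly\in Y$ meaningful) and not to accidentally require $y\in Y$ alone, which would not suffice.
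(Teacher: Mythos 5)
Your argument is correct and follows essentially the same route as the paper: bound the coefficient sequence $\{\spr{Ly,f_k}_Y\}_{k\in\N}$ in $\lt(\N)$ via the upper frame bound of $\{f_k\}_{k\in\N}$ and the isometry \eqref{eq_L_Y_Z}, then apply the bounded synthesis operator of the dual frame $\{\ekt\}_{k\in\N}$ (whose frame property comes from Lemma~\ref{lem_ek_L_frame}) with norm at most $1/\sqrt{B_1}$. The only difference is the order of the two steps and your explicit substitution $B_1 = c_1^2 C_1$ in the final estimate, which is immaterial.
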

\begin{proof}
It follows from Lemma~\ref{lem_ek_L_frame} that the set $\{e_k\}_{k\in\N}$ forms a frame over $X$ with some frame bounds $B_1$ and $B_2$. Since the dual-frame $\{\ekt\}_{k\in\N}$ then forms a frame with bounds $B_2^{-1}$ and $B_1^{-1}$, it follows from \eqref{eq_bound_F_Fadj} that
	\begin{equation*}
		\norm{\ADb y }_X =  \norm{\sum_{k=1}^\infty \spr{Ly ,f_k}_Y \ekt}_X  
		= \norm{\Ft^*\kl{\spr{Ly ,f_k}_Y}_{k=1}^\infty }_X 
		\leq \frac{1}{\sqrt{B_1}} \norm{ \kl{\spr{Ly ,f_k}_Y}_{k=1}^\infty }_{\lt(\N)} \,.
	\end{equation*}
Now since the set $\{f_k\}_{k\in\N}$ forms a frame over $Y$ with some frame bounds $C_1$ and $C_2$, it follows together with \eqref{eq_L_Y_Z} that
	\begin{equation*}
		\norm{ \kl{\spr{Ly ,f_k}_Y}_{k=1}^\infty }_{\lt(\N)}^2
		=  \sum_{k=1}^\infty \abs{\spr{Ly ,f_k}_Y}^2
		\leq C_2 \norm{Ly}_Y^2 = C_2 \norm{y}_Z^2 \,.
	\end{equation*}
Combining the above we get that if $y \in Z$ then $\ADb y \in X$, which yields the assertion. 
\end{proof}

We can now proceed to derive the following 
\begin{theorem}\label{thm_main_IIIII}
Let $A:X \to Y$ be a bounded linear operator satisfying Assumption~\ref{ass_main_III} hold, and let the functions $e_k$ be defined as in \eqref{def_ek_L}. Then for any $y \in Z$ the function $\ADb y$ as defined in \eqref{def_ADt} is the unique solution of the linear operator equation \eqref{Ax=y}.
\end{theorem}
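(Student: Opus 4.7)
The plan is to establish the identity $LA(\ADb y) = Ly$, from which $A(\ADb y) = y$ follows by injectivity of $L$, while uniqueness of any solution of \eqref{Ax=y} is immediate because the left-hand inequality in \eqref{cond_A_stability} forces $N(A) = \{0\}$. As recorded before the theorem, the concatenation $LA : X \to Y$ is bounded, linear and continuously invertible: $A$ is a bijection from $X$ onto $Z$ by \eqref{cond_A_stability}, and $L$ is a bijection from $Z$ onto $Y$ by \eqref{eq_L_Y_Z}. For $y \in Z$ we have $Ly \in Y$, so there is a unique $\bar x \in X$ with $LA\bar x = Ly$, equivalently $A\bar x = y$.

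To show $\ADb y = \bar x$, I would feed $\ADb y$ into the frame decomposition of $LA$ supplied by Lemma~\ref{lem_dec_LA}, obtaining
$$
LA(\ADb y) = \sum_{k=1}^\infty \langle \ADb y, e_k\rangle_X \, \tilde f_k .
$$
Now $\ADb y = \tilde F^* a$ with coefficient sequence $a := \{\langle Ly, f_j\rangle_Y\}_{j\in\N}$, where $F$ and $\tilde F$ denote the analysis operators of $\{e_k\}_{k\in\N}$ and its dual $\{\tilde e_k\}_{k\in\N}$. The projector identity $F\tilde F^* = P$ from \eqref{eq_frame_ops}, with $P$ the orthogonal projector of $\ell_2(\N)$ onto $R(F) = R(\tilde F)$, then yields
$$
\{\langle \ADb y, e_k\rangle_X\}_{k\in\N} = F \tilde F^* a = P a .
$$
The crucial step is to argue $a \in R(F)$, so that $Pa = a$. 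But by equation \eqref{eq_LAx} in Lemma~\ref{lem_dec_LA} applied to $\bar x$, one has $\langle \bar x, e_k\rangle_X = \langle LA\bar x, f_k\rangle_Y = \langle Ly, f_k\rangle_Y$, so $a = F\bar x \in R(F)$. Substituting back and invoking the dual frame reconstruction \eqref{eq_frame_rec} for $Ly \in Y$ gives
$$
LA(\ADb y) = \sum_{k=1}^\infty \langle Ly, f_k\rangle_Y \, \tilde f_k = Ly ,
$$
and the injectivity of $L$ finishes the argument.

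The main obstacle I anticipate is exactly this projector step: since $\{e_k\}_{k\in\N}$ is not assumed to be exact, $R(F)$ is in general a proper subspace of $\ell_2(\N)$, and there is no a priori reason why an arbitrary coefficient sequence should coincide with its projection $Pa$. What rescues the argument is the existence of a bona fide pre-image $\bar x$, granted by the bijectivity of $LA$; this is precisely what places $a$ into $R(F)$ and makes the synthesis--analysis loop close up. Once this point is cleared, the rest of the proof is simply assembling Lemma~\ref{lem_dec_LA} with the standard frame reconstruction formula for $Ly$ in $Y$.
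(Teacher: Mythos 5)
Your proof is correct and rests on the same two ingredients as the paper's: the unique solvability of $LAx = Ly$ coming from \eqref{cond_A_stability}, and the identity $\langle \bar x, e_k\rangle_X = \langle LA\bar x, f_k\rangle_Y = \langle Ly, f_k\rangle_Y$ supplied by Lemma~\ref{lem_dec_LA}. The only (cosmetic) difference is in the endgame: the paper reconstructs the exact solution $x^*$ from these coefficients via $\Ft^*F = I$ and recognizes the result as $\ADb y$, whereas you push $\ADb y$ forward through $LA$ and close the loop with $F\Ft^* = P$ together with your correct observation that $a = F\bar x \in R(F)$ forces $Pa = a$ --- two faces of the same argument.
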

\begin{proof}
For any $y \in Z$ it follows from \eqref{cond_A_stability} that there exists a unique solution $x^* \in X$ of equation \eqref{LAx=Ly}. Since by Assumption~\ref{ass_main_III} the set $\{f_k\}_{k\in\N}$ forms a frame with frame bounds $C_1,C_2 > 0$, it follows that
	\begin{equation*}
		\sum_{k=1}^\infty \abs{\spr{LA x^* - Ly , f_k }_Y }^2
		\leq 
		C_2 \norm{LA x^* - L y}_{Y}^2 
		= 0 \,,
	\end{equation*}
and thus for each $k\in\N$ there holds
	\begin{equation*}
		\spr{x^*, e_k}_X \overset{\eqref{eq_LAx}}{=} \spr{LAx^*,e_k}_X = \spr{Ly,f_k}_Y \,.
	\end{equation*}
Since by Lemma~\ref{lem_ek_L_frame} the set $\{e_k\}_{k\in\N}$ forms a frame over $X$, it follows from \eqref{eq_frame_rec} that 
	\begin{equation*}
		x^* = \sum_{k=1}^\infty \spr{x^*,e_k}_X \ekt = \sum_{k=1}^\infty \spr{Ly,f_k}_Y \ekt = \ADb y \,.
	\end{equation*}
Hence, the function $\ADb y$ is the unique solution of \eqref{LAx=Ly}. Applying the operator $L^{-1}$ to this equation we see that $\ADb y$ also solves \eqref{Ax=y}. Together with the fact that due to \eqref{cond_A_stability} the nullspace of $A$ is trivial, this yields the assertion.
\end{proof}

\begin{remark}
It can be seen from the proof of Lemma~\ref{lem_ADb_bounded} that
	\begin{equation*}
		\norm{\ADb y}_X \leq \sqrt{C_2/B_1}  \norm{L y}_Y \,.
	\end{equation*}
However, noisy data $\yd$ usually do not belong to the space $Z$, and thus $\ADb \yd$ is not well-defined. Hence, in order to obtain a stable approximation of $\ADb y$ in this case one can, e.g., consider a family $U_\alpha : Y \to Z$ of bounded linear operators and define
	\begin{equation*}
		\xad := \ADb\, U_\alpha \yd \,.
	\end{equation*}
One possible choice for example is $U_\alpha := U := \, L^{-1} = (EE^*)^{1/2}$. Note that since $L$ is often some sort of differential operator (for example if $Y$ and $Z$ are Sobolev spaces), the introduction of this operator $U$ basically amounts to a smoothing of the data $\yd$; compare for example with \cite{Ramlau_Teschke_2004_1,Ramlau_Teschke_2004_2, Klann_Ramlau_2008}. 
\end{remark}\vspace{7pt}

\section{Applications in Tomography} \label{sect_numerical_results}

In this section, we consider the application of our frame decomposition results to some tomographic imaging problems. More precisely, we first consider computerized tomography based on the Radon transform, and then move on to atmospheric tomography.

\subsection{Application to Computerized Tomography}

Many problems of practical importance, for example in industry or in medicine, are based on the well-known \emph{Radon transform} \cite{Natterer_2001,Louis_1989}, which in 2D is given by
	\begin{equation*}\label{Radon}
		(R x) (s,\omega) := \int_\R x(s\omega + t \omega^\perp) \, dt \,.
	\end{equation*}
Together with the parametrisation $\omega = \omega(\vphi) = (\cos(\vphi),\sin(\vphi))^T$ we obtain the operator
	\begin{equation}\label{Radon_A}
	\begin{split}
		(Ax)(s,\vphi) := (Rx)(s,w(\vphi)) 
		= \int_\R x(s\omega(\vphi) + t \omega(\vphi)^\perp) \, dt \,,
	\end{split}	
	\end{equation}
which is the version of the Radon transform commonly used for computational purposes. For the subsequent considerations we first need to recall a number of definitions and results from \cite{Natterer_2001,Louis_1989}, starting with the definition of the Sobolev spaces
	\begin{equation*}
	\begin{split}
		\HaRN &:= \Kl{ y \in \LtRN \, \vert \, \norm{y}_\HaRN < \infty  } \,,
		\\
		\norm{y}_\HaRN &:= \kl{ \int_{\R^N} (1+ \abs{\xi}^2)^\alpha \abs{\hat{y}(\xi)}^2 \, d\xi   }^{1/2} \,.
	\end{split}
	\end{equation*}
Furthermore, for any open subset $\Omega \subset \R^N$ we also define the Sobolev spaces
	\begin{equation*}
		\HazO := \Kl{y \in \HaRN \, \vert \, \supp{f} \subseteq \bar{\Omega}} \,,
	\end{equation*}	
which are equipped with the same norms as $\HaRN$. Next, we introduce the domains $\OD :=  \{x \in \R^2 \, \vert \, \abs{x} \leq 1\}$ and $\OS := \R \times [0,2\pi)$, as well as the Sobolev spaces	
	\begin{equation}\label{def_HaOS}
	\begin{split}
		\HaOS &:= \Kl{y \in \LtOS \, \vert \, \norm{y}_\HaOS < \infty } \,,
		\\
		\norm{y}_\HaOS^2 &:= \int_{0}^{2\pi} \norm{y(\cdot,\vphi)}_\HaR^2 \, d\vphi \,.  
	\end{split}
	\end{equation} 
It has been shown in \cite{Natterer_2001} that for each $\alpha \in \R$ there exist positive constants $c(\alpha)$, $C(\alpha)$ such that 
	\begin{equation}\label{eq_Radon_stability}
		c(\alpha) \norm{x}_{\HazOD} \leq \norm{A x}_{H^{\alpha + 1/2}(\OS)  } \leq C(\alpha) \norm{x}_{\HazOD} \,.
	\end{equation}
This important result can be used to derive the following
\begin{theorem}\label{thm_main_Radon}
Let the Radon transform $A : \HazOD \to \HbOS$ be defined as in \eqref{Radon_A} for some $0 \leq \alpha,\beta \in \R$ satisfying $\beta \leq \alpha + 1/2$. Furthermore, let the functions $f_k$ be such that the set $\{f_k \}_{k\in\N}$ forms a frame over $\HbOS$. Additionally, assume that there exists a sequence of coefficients $0 < \ak \in \R$ such that the norm equivalence
	\begin{equation}\label{cond_norm_HohOS}
		\norm{y}_\HapOS^2 \simeq
		\sum_{k=1}^\infty \ak^2  \abs{\spr{y,f_k}_\HbOS}^2 
		\,,
	\end{equation} 
holds and define $e_k := \ak A^* f_k $. Then the set $\{e_k\}_{k\in\Z}$ forms a frame over $\HazOD$ and
	\begin{equation*}
		A x = \sum\limits_{k=1}^\infty \ak^{-1} \spr{x, e_k}_\HazOD \fkt \,.
	\end{equation*}
Furthermore, for any $y \in \HapOS$ the unique solution of $Ax=y$ is given by 
	\begin{equation}\label{dec_ADy_Radon_general}
		\AD y = \sum\limits_{j,k\in\Z} \ak \spr{y,f_k}_\HbOS \ekt \,.
	\end{equation}
Among all possible decompositions of $\AD y$ in terms of the dual frame functions $\ekt$, the decomposition \eqref{dec_ADy_Radon_general} is the most economical one in the sense of Proposition~\ref{prop_frame_min}.
\end{theorem}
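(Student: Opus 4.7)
The plan is to observe that Theorem~\ref{thm_main_Radon} is a direct specialisation of Theorem~\ref{thm_main_III} once the abstract hypotheses are matched with the Sobolev setting of the Radon transform. Setting $X = \HazOD$, $Y = \HbOS$, and $Z = \HapOS$, the assumption $\beta \leq \alpha + 1/2$ gives the continuous embedding $Z \subseteq Y$, and the stability estimate \eqref{cond_A_stability} in Assumption~\ref{ass_main_II} is exactly the Natterer-type inequality \eqref{eq_Radon_stability} for $A$. The frame norm-equivalence \eqref{cond_norm_Z} with $Y$-pairings against $\{f_k\}_{k\in\N}$ is precisely the hypothesis \eqref{cond_norm_HohOS}, with the prescribed sequence $\{\ak\}_{k\in\N}$ of weights.

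Next, I would make the specific choice $\lk := 1/\ak$, which is real and satisfies $\ak\,\abs{\lk} = 1$ for all $k$, so that \eqref{cond_sigma_k_bounds} of Proposition~\ref{prop_stability} holds trivially with $b_1 = b_2 = 1$. With this choice the abstract definition $e_k = (1/\lkc)\,A^*f_k$ from \eqref{def_ek_frame} coincides with the definition $e_k = \ak A^*f_k$ used in the statement, and Proposition~\ref{prop_stability} immediately delivers that $\{e_k\}_{k\in\N}$ is a frame over $\HazOD$. Moreover, condition \eqref{cond_frames_1D} reduces to $\spr{Ax,f_k}_\HbOS = \ak^{-1} \spr{x,e_k}_\HazOD$, from which the decomposition $Ax = \sum_k \ak^{-1}\spr{x,e_k}_\HazOD \fkt$ follows from Proposition~\ref{prop_dec_A} in the one-dimensional form \eqref{eq_dec_A_AD_1D}.

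For the solvability and the formula for $\AD y$, note that \eqref{eq_Radon_stability} makes $A$ injective and shows that $A$ maps $\HazOD$ continuously and boundedly below onto $\HapOS$, so that for $y \in \HapOS$ equation~\eqref{Ax=y} has a unique solution. Theorem~\ref{thm_main_III} then identifies this solution with $\AD y$ as defined in \eqref{eq_dec_A_AD_1D}; substituting $\lk = 1/\ak$ gives $\AD y = \sum_k \ak \spr{y,f_k}_\HbOS \ekt$, matching \eqref{dec_ADy_Radon_general}. The most-economical property among all decompositions in terms of the dual frame $\{\ekt\}_{k\in\N}$ is inherited via the same part of Theorem~\ref{thm_main_III}, which appeals to Proposition~\ref{prop_frame_min}.

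There is no substantive obstacle here: the theorem is essentially a bookkeeping exercise dressing up the abstract Theorem~\ref{thm_main_III} in the concrete clothing of the Radon transform. The only mild subtlety worth verifying carefully is that all inner products and the adjoint $A^*$ are consistently taken with respect to the correct Hilbert-space structures, so that $A^* : \HbOS \to \HazOD$ and the identity $\spr{Ax,f_k}_\HbOS = \spr{x,A^*f_k}_\HazOD$ makes the identification $e_k = \ak A^*f_k$ compatible with the coupling relation \eqref{cond_frames_1D}; once this alignment is in place, every step above is immediate.
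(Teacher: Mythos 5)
Your proposal is correct and follows essentially the same route as the paper: both verify that the Natterer stability estimate \eqref{eq_Radon_stability} yields \eqref{cond_A_stability} with $Z = \HapOS$, that \eqref{cond_norm_HohOS} is \eqref{cond_norm_Z}, and that the choice $\lk = \ak^{-1}$ makes \eqref{cond_sigma_k_bounds} hold trivially, so that Theorem~\ref{thm_main_III} applies. The only difference is that you spell out the intermediate roles of Proposition~\ref{prop_stability} and the coupling relation \eqref{cond_frames_1D}, which the paper subsumes into its single appeal to Theorem~\ref{thm_main_III}.
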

\begin{proof}
First note that since $\beta \leq \alpha + 1/2$, it follows from \eqref{eq_Radon_stability} that $A$ is a well-defined bounded linear operator. Furthermore, it also follows from \eqref{eq_Radon_stability} that \eqref{cond_A_stability} is satisfied with $X = \HazOD$, $Y = \HbOS$, and $Z = \HapOS$. Moreover, due to the norm equivalence \eqref{cond_norm_HohOS} it follows that also \eqref{cond_norm_Z} holds.  Additionally, by definition the functions $e_k$ satisfy \eqref{def_ek_frame} with $\lk = \ak^{-1}$ and thus also \eqref{cond_sigma_k_bounds} also. Hence, Theorem~\ref{thm_main_III} is applicable, which yields the assertion.
\end{proof}
As a consequence of the above result we obtain the following
\begin{theorem}\label{thm_Radon_specific}
Let $0 \leq \alpha \in \R$ and let the Radon transform $A : \HazOD \to \LtOS$ be defined as in \eqref{Radon_A}. Furthermore, let $\{\psi_{j,k}\}_{j,k\in\Z}$ be an orthonormal wavelet basis of $\LtR$, let $\{w_l\}_{l\in\N}$ be an orthonormal basis of $\LtT$, and define the functions 
	\begin{equation*}
		\fjkl(s,\vphi) := \psi_{j,k}(s) w_l(\vphi) \,,
		\qquad
		\text{and}
		\qquad
		\ejkl := \kl{1 + 2^{-2j \alpha}} A^*\fjkl \,.
	\end{equation*}
Then the set $\{\ejkl\}_{j,k\in \Z\,, l \in \N}$ forms a frame over $\HazOD$ and 
	\begin{equation*}
		A x = \sum\limits_{j,k \in \Z}\sum\limits_{l=1}^\infty \kl{1 + 2^{-2j \alpha}}^{-1} \spr{x, \ejkl}_\HazOD \fjklt 
		=
		\sum\limits_{j,k \in \Z}\sum\limits_{l=1}^\infty \spr{x,A^*\fjkl} \fjkl \,.
	\end{equation*}
Furthermore, for any $y \in \HapOS$ the unique solution of $Ax=y$ is given by 
	\begin{equation}\label{dec_ADy_Radon_specific}
		\AD y = \sum\limits_{j,k \in \Z} \sum\limits_{l=1}^\infty \kl{1 + 2^{-2j (\alpha+1/2)}}\spr{y,\fjkl}_\LtOS \ejklt \,.
	\end{equation}
Among all possible decompositions of $\AD y$ in terms of the dual frame functions $\ekt$, the decomposition \eqref{dec_ADy_Radon_specific} is the most economical one in the sense of Proposition~\ref{prop_frame_min}.
\end{theorem}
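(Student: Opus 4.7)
The plan is to verify the two hypotheses of Theorem~\ref{thm_main_Radon} for the concrete choice $Y = \LtOS = H^0(\OS)$ (so $\beta=0 \leq \alpha+1/2$) and then invoke that theorem directly.

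\textbf{Step 1 (the $f_k$ form a frame over $Y$).} Since $\{\psi_{j,k}\}_{j,k\in\Z}$ is an orthonormal basis of $\LtR$ and $\{w_l\}_{l\in\N}$ is an orthonormal basis of $\LtT$, the tensor product system $\{\fjkl\}_{j,k\in\Z,\,l\in\N}$ is an orthonormal basis of $\LtR\otimes\LtT \cong \LtOS$. In particular it is a tight frame over $\LtOS$ with frame bound $1$, and its dual satisfies $\fjklt=\fjkl$.

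\textbf{Step 2 (the norm equivalence~\eqref{cond_norm_HohOS}).} This is the heart of the argument and the step I expect to be the main obstacle, as it requires invoking the wavelet characterization of Sobolev spaces on the line. Assuming $\psi$ has sufficient regularity and vanishing moments, one has
\begin{equation*}
	\norm{g}_\HaR^2 \simeq \sum_{j,k\in\Z} \bigl(1 + 2^{-2j\alpha}\bigr)\abs{\spr{g,\psi_{j,k}}_\LtR}^2,
	\qquad g\in\HaR,
\end{equation*}
(with the analogous statement for exponent $\alpha+1/2$). Given $y\in\HapOS$ I would fix $\vphi\in[0,2\pi)$, apply this characterization to the slice $y(\cdot,\vphi)\in H^{\alpha+1/2}(\R)$, and then integrate over $\vphi$. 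Because $\spr{y(\cdot,\vphi),\psi_{j,k}}_\LtR = \sum_l \spr{y,\fjkl}_\LtOS w_l(\vphi)$, Parseval's identity for the basis $\{w_l\}$ gives
\begin{equation*}
	\int_0^{2\pi}\abs{\spr{y(\cdot,\vphi),\psi_{j,k}}_\LtR}^2 \,d\vphi = \sum_{l=1}^\infty \abs{\spr{y,\fjkl}_\LtOS}^2 .
\end{equation*}
Combining this with the definition~\eqref{def_HaOS} of $\HapOS$ as an integral over $\vphi$ of squared $H^{\alpha+1/2}(\R)$ norms yields \eqref{cond_norm_HohOS} with the prescribed coefficients $a_{j,k,l}$. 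Once the constants $\ak$ are identified from this equivalence, the assumption $\ak\in\R_{>0}$ of Theorem~\ref{thm_main_Radon} is immediate.

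\textbf{Step 3 (invoke Theorem~\ref{thm_main_Radon}).} With Steps 1 and 2 in place, all hypotheses of Theorem~\ref{thm_main_Radon} hold. Applying it directly yields that $\{\ejkl\}$ is a frame over $\HazOD$, gives the decomposition of $A$, provides the inversion formula~\eqref{dec_ADy_Radon_specific} as the unique solution of $Ax=y$ for $y\in\HapOS$, and establishes its minimal-coefficient (most economical) character in the sense of Proposition~\ref{prop_frame_min}. The second form of the decomposition of $A$ (in terms of $\spr{x,A^*\fjkl}\fjkl$) follows by substituting $\spr{x,\ejkl} = a_{j,k,l}\spr{x,A^*\fjkl} = a_{j,k,l}\spr{Ax,\fjkl}$ into $Ax = \sum a_{j,k,l}^{-1}\spr{x,\ejkl}\fjklt$ and using $\fjklt=\fjkl$ from Step 1, which collapses to the orthonormal expansion of $Ax$ in $\LtOS$.
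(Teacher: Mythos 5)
Your proposal is correct and follows essentially the same route as the paper's own proof: it verifies the hypotheses of Theorem~\ref{thm_main_Radon} for $\beta=0$ by combining the wavelet characterization of $H^{\alpha+1/2}(\R)$ on the slices $y(\cdot,\vphi)$ with Parseval's identity for the orthonormal basis $\{w_l\}$ to obtain the norm equivalence \eqref{cond_norm_HohOS}, and then invokes that theorem. Your additional remarks on the tight-frame property $\fjklt=\fjkl$ and the derivation of the second form of the decomposition of $A$ are consistent with what the paper leaves implicit.
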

\begin{proof}
Since $\{\psi_{j,k}\}_{j,k\in\Z}$ is an orthonormal wavelet basis of $\LtR$, the following norm equivalence holds for any $s \in \R$ (see e.g~\cite{Daubechies_1992}):
	\begin{equation*}\label{eq_norm_equiv_wavelets}
		\norm{u}_{H^s(\R)}^2 \simeq \sum\limits_{j,k \in \Z}  (1+ 2^{-2 j s}) \abs{\spr{u,\psi_{j,k}}_{\Lt(\R)}}^2  \,,
	\end{equation*}	
Hence, together with \eqref{def_HaOS} we obtain the norm equivalence	
	\begin{equation*}
		\norm{y}_\HapOS^2 
		\simeq
		\int_{0}^{2\pi} \sum\limits_{j,k \in \Z}  \kl{1+ 2^{-2 j (\alpha+1/2)}} \abs{\spr{y(\cdot,\vphi),\psi_{j,k}}_{\Lt(\R)}}^2 \, d\vphi \,.
	\end{equation*} 
Now, since by its definition the set $\{\fjkl\}_{j,k\in\Z\,, l \in \N}$ forms an orthonormal basis over $\LtOS$, and thus any function $y \in \LtOS$ can be written in the form
	\begin{equation*}
		y = \sum\limits_{j,k \in \Z}\sum\limits_{l=1}^\infty \spr{y,\fjkl}_\LtOD \fjkl \,,
	\end{equation*} 
it follows that
	\begin{equation*}
	\begin{split}
		\norm{y}_\HapOS^2 
		&\simeq
		\int_{0}^{2\pi} \sum\limits_{j,k \in \Z}  \kl{1+ 2^{-2 j (\alpha+1/2)}} \abs{\spr{y(\cdot,\vphi),\psi_{j,k}}_{\Lt(\R)}}^2 \, d\vphi 
		\\
		&\simeq
		\int_{0}^{2\pi} \sum\limits_{j,k \in \Z}  \kl{1+ 2^{-2 j (\alpha+1/2)}}\abs{\sum\limits_{l=1}^\infty \spr{y,\fjkl}_\LtOD w_l(\vphi)}^2 \,. d\vphi 
	\end{split}
	\end{equation*} 	
Hence, since the set $\{w_l\}_{l\in\N}$ forms an orthonormal basis over $\LtT$ we obtain that
	\begin{equation*}
		\norm{y}_\HapOS^2 
		\simeq
		\sum\limits_{j,k \in \Z} \sum\limits_{l=1}^\infty \kl{1+ 2^{-2 j (\alpha+1/2)}}  \abs{ \spr{y,\fjkl}_\LtOD }^2 \,.
	\end{equation*} 
Consequently, for the special case $\beta = 0$ and with the choice
	\begin{equation*}
	\begin{split}
		\Kl{e_k}_{k\in\N} &:= \{\ejkl\}_{j,k\in \Z\,, l \in \N} \,,
		\qquad
		\Kl{f_k}_{k\in\N} := \{\fjkl\}_{j,k\in \Z\,, l \in \N} \,,
		\\
		\{\ak\}_{k\in\N} &:= \{ 1+ 2^{-2 j (\alpha+1/2)}  \}_{j,k\in \Z\,, l \in \N} \,,
	\end{split}
	\end{equation*}
all assumptions of Theorem~\ref{thm_main_Radon} are satisfied, which thus yields the assertions.
\end{proof}

\vspace{0pt}
\begin{remark}
The explicit representation of $\AD y$ given in \eqref{dec_ADy_Radon_specific} can be used as the basis of an efficient numerical routine for solving the tomography problem $A x = y$. For example, one can replace the infinite sums over the indices $j,k,l$ by finite sums, and pre-compute a numerical approximation of each dual frame function $\ejklt$ via \eqref{eq_dual_N_approx}. Then, for each right-hand side $y$, one only needs to compute the coefficients $\spr{y,\fjkl}_\LtOS$ and sum up according to \eqref{dec_ADy_Radon_specific}. If one e.g.\ chooses an exponential basis for $\Kl{w_l}_{l\in\N}$, then these coefficients can be efficiently computed using the (fast) Fourier and wavelet transforms. Hence, in this case an efficient implementation of \eqref{dec_ADy_Radon_specific} for computing $\AD y$ is possible.
\end{remark} 
\vspace{7pt}

\begin{remark}
Note that for $x \in \HazOD$ there holds $\supp{Ax} \subseteq [-1,1]\times[0,2\pi)$. Hence, redefining the tomography operator $A : \HazOD \to \Lt([-1,1]\times[0,2\pi))$ one can define 
	\begin{equation*}\label{def_Radon_wjk}
		w_{j,k}(s,\vphi) := \frac{1}{2\sqrt{ \pi}}\exp(i j \pi s) \exp(ik \vphi) \,,
		\qquad
		v_{j,k} := \kl{1+\abs{j}^2}^{\alpha/2} A^* w_{j,k} \,.
	\end{equation*} 
and use the fact (see e.g.~\cite{Natterer_2001}) that
	\begin{equation*}
		\norm{z}_{\HazOo}^2
		\simeq \pi \sum_{j\in\Z} \kl{1+\pi^2\abs{j}^2}^{\alpha} \abs{\hat{z}(\pi j)}^2
		\simeq
		\sum_{j\in\Z} \kl{1+\abs{j}^2}^{\alpha} \abs{\spr{z,e^{i \pi j \cdot} }_\LtOo}^2 \,,
	\end{equation*}
in order to obtain the alternative decomposition
	\begin{equation*}\label{dec_ADy_Radon_wjk}
		\AD y = \sum\limits_{j,k\in\Z} \kl{1+\abs{j}^2}^{\alpha/2} \spr{y,w_{j,k}}_\LtOS \tilde{v}_{j,k} \,,
	\end{equation*}
which can again be efficiently numerically implemented using the fast Fourier transform.
\end{remark}
\vspace{7pt}

Alternatively, using the decomposition approach based on the operator $L$ we obtain

\begin{theorem}\label{thm_main_Radon_L}
Let the Radon transform $A : \HazOD \to \HbOS$ be defined as in \eqref{Radon_A} for some $0 \leq \alpha,\beta \in \R$ satisfying $\beta \leq \alpha + 1/2$. Furthermore, let the functions $f_k \in \HapOS$ be such that the set $\{f_k \}_{k\in\N}$ forms a frame over $\HbOS$.	Furthermore, let $E : \HapOS \to \HbOS$ denote the embedding operator, let $L := (EE^*)^{-1/2}$ and define the functions $e_k := A^* L f_k $. Then the set $\{e_k\}_{k \in \Z}$ forms a frame over $\HazOD$ and there holds
	\begin{equation*}
		A x
		=L^{-1}\kl{\sum\limits_{k \in \Z}  \spr{x,e_k}_\HazOD \fkt }\,.
	\end{equation*}
Furthermore, for any $y \in \HapOS$ the unique solution of \eqref{Ax=y} is given by
	\begin{equation*}
		\ADb y = \sum\limits_{j,k\in\Z}  \spr{L y,f_k}_\HbOS \ekt  \,.
	\end{equation*}
\end{theorem}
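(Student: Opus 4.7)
The plan is to verify that Assumption~\ref{ass_main_III} is satisfied in this setting and then invoke the machinery of Section~\ref{sect_smooth_II} essentially verbatim, reading off the three claims from Lemma~\ref{lem_ek_L_frame}, Lemma~\ref{lem_dec_LA}, and Theorem~\ref{thm_main_IIIII} respectively. Identify $X = \HazOD$, $Y = \HbOS$, and $Z = \HapOS$. The stability property \eqref{cond_A_stability} follows at once from the Natterer estimate \eqref{eq_Radon_stability} with $c_1 = c(\alpha)$ and $c_2 = C(\alpha)$. The hypothesis $\beta \leq \alpha + 1/2$, combined with the monotonicity of the Sobolev weights $(1+\xi^2)^s$ in $s$ and the defining norm \eqref{def_HaOS} of $\HaOS$, gives the continuous embedding $\HapOS \hookrightarrow \HbOS$ with $\norm{y}_\HbOS \leq \norm{y}_\HapOS$, as required for \eqref{cond_Y_Z}. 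Density of $\HapOS$ in $\HbOS$ is standard (e.g., via $C_c^\infty$). The frame property of $\{f_k\}_{k\in\N}$ over $\HbOS$ and the inclusion $f_k \in \HapOS$ are built into the hypotheses, so Assumption~\ref{ass_main_III} holds and the operator $L = (EE^*)^{-1/2}$ from Section~\ref{sect_smooth_II} is available.

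With the assumption verified, Lemma~\ref{lem_ek_L_frame} immediately yields that $\{e_k\}_{k\in\Z}$ with $e_k = A^* L f_k$ forms a frame over $\HazOD$, proving the first claim. For the representation of $A$, Lemma~\ref{lem_dec_LA} gives
\begin{equation*}
  LAx = \sum_{k\in\Z} \spr{x,e_k}_\HazOD \fkt,
\end{equation*}
and since $L$ is strictly positive and selfadjoint, $L^{-1} = (EE^*)^{1/2}$ is well-defined and bounded from $Y$ into $Z^*$-sense as in Section~\ref{sect_smooth_II}; applying it to both sides gives exactly the stated expression for $Ax$. Finally, for any $y \in \HapOS = Z$, Theorem~\ref{thm_main_IIIII} asserts that $\ADb y = \sum_{k\in\Z} \spr{Ly,f_k}_\HbOS \ekt$ is the unique solution of $Ax = y$, which matches the claimed formula.

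There is essentially no obstacle here: the theorem is a direct specialization of the abstract framework of Section~\ref{sect_smooth_II} to the Radon transform. The only non-trivial ingredient is the Natterer stability estimate \eqref{eq_Radon_stability}, which is quoted from the literature, and the elementary verification that the Sobolev chain $\HapOS \subseteq \HbOS$ satisfies the density and norm-comparison requirements of Assumption~\ref{ass_main_III}. Once those are in place, the proof reduces to naming the spaces and quoting the three abstract results in sequence.
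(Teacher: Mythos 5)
Your proposal is correct and follows essentially the same route as the paper: one verifies that Assumption~\ref{ass_main_III} holds with $X = \HazOD$, $Y = \HbOS$, $Z = \HapOS$ via the stability estimate \eqref{eq_Radon_stability} and the Sobolev norm comparison $\norm{y}_\HbOS \leq \norm{y}_\HapOS$ for $\beta \leq \alpha + 1/2$, and then reads off the three claims from Lemma~\ref{lem_ek_L_frame}, Lemma~\ref{lem_dec_LA}, and Theorem~\ref{thm_main_IIIII}. Your version is in fact slightly more explicit than the paper's about why \eqref{cond_Y_Z} holds.
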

\begin{proof}
As in the proof of Theorem~\ref{thm_main_Radon} we have that $A$ is a well-defined bounded linear operator and that \eqref{cond_A_stability} is satisfied with $X = \HazOD$, $Y = \HbOS$, and $Z = \HapOS$. Furthermore, since $\beta \leq \alpha +1$ we have that $\HapOS$ is a dense subspace of $\HbOS$ and thus also \eqref{cond_Y_Z} holds. Hence, Assumption~\ref{ass_main_III} is satisfied, which implies that Theorem~\ref{thm_main_IIIII} is applicable, which yields the assertions.
\end{proof}

\subsection{Application to Atmospheric Tomography}\label{sect_tomo_atmos}

Atmospheric tomography plays an important part in many Adaptive Optics (AO) systems for the improvement of the imaging quality of earthbound astronomical telescopes such as the Extremely Large Telescope (ELT) \cite{ELT_2020} of the European Southern Observatory (ESO), currently under construction in the Atacama desert in Chile. Based on measurements of the incoming light of both Natural Guide Stars (NGS) and artificially created Laser Guide Stars (LGS) in the vicinity of an object of interest, one aims at reconstructing the atmospheric turbulence on a finite number of turbulent layers, in order to then suitably adjust deformable mirrors in such a way that the incoming wavefronts are corrected (flattened) after reflection on these mirrors. Since the atmosphere is constantly changing, this has to be done in real time. For details we refer to \cite{Roddier_1999,Roggemann_Welsh_1996,Ellerbroek_Vogel_2009}.

Mathematically, the atmospheric tomography problem can be written as a linear operator equation of the form \eqref{Ax=y}, using the \emph{atmospheric tomography operator} \cite{Neubauer_Ramlau_2017,Hubmer_Ramlau_2020} 
	\begin{equation*}\label{def_Tomo_A}
	\begin{split}
		A \, : \, \D(A) :=& \prod\limits_{l=1}^L \LtOl \to \LtOA^G \,,
		\\
		\phi &\mapsto \vphi_g = (A\phi)_g(r)
		:=
		\sum\limits_{l=1}^{L}\phi_l( \clg r+\aghl)  \,,
		\qquad g = 1,\dots,G \,,
	\end{split}
	\end{equation*}
where $\phi = (\phi_l)_{l=1,\dots,L}$ denotes the turbulence layers and $\vphi = (\vphi_g)_{g=1,\dots,G}$ are the incoming wavefronts. Here, $L$ denotes the number of atmospheric layers, located at the  heights $h_l$, $G$ denotes the total number of guide stars with corresponding view directions $\ag =(\agx,\agy)\in \R^2$, and $\clg$ are constants depending on the layer and the guide star. Furthermore, the domain $\OA \subset \R^2$ denotes the telescope aperture, and
	\begin{equation*}
	\begin{split}
		\Ol := \bigcup\limits_{g = 1}^G  \OAaghl \,,
		\qquad
		\text{where}
		\qquad
		\OAaghl := \Kl{r \in \R^2 \, : \, \frac{r - \aghl}{\clg}  \in \OA} \,.
	\end{split}
	\end{equation*}
For more details on this setting we refer to \cite{Neubauer_Ramlau_2017,Hubmer_Ramlau_2020}  and the references therein.

It follows from \cite[Theorem~3.1]{Neubauer_Ramlau_2017} that the operator $A$ is not compact with respect to the canonic inner products and hence, a singular system does not necessarily need to exist. Moreover, to our knowledge neither a Wavelet-Vaguelette nor a similar decomposition of this operator is known. However, it was recently shown in \cite{Hubmer_Ramlau_2020} that a frame decomposition of $A$ is possible. The corresponding frames are built from the functions
	\begin{align*}
		w_{jk}(x,y) &:= \frac{1}{2T} \exp(ij\pi x/T) \exp(ik\pi y/T) \,, 
		\\
		w_{jk,lg}(x,y) &:= \clg^{-1} w_{jk}((x,y)/\clg)I_{\clg \OA + \aghl}(x,y) \,.,
	\end{align*}
where $I_{\clg \OA + \aghl}(x,y)$ denotes the indicator function of the domain $\clg \OA + \aghl$. It was shown that if $T \geq 0$ is chosen large enough, then the set $\{w_{jk}\}_{jk\in\Z}$ forms a tight frame with frame bound $1$ over $\LtOA$ and the sets $\{w_{jk,lg}\}_{jk\in\Z,g=1,\dots,G}$ form frames with frame bounds $C_1 = 1$ and $C_2 = G$ over $\LtOl$. Furthermore, one obtains
	\begin{equation*}
		\spr{(A \phi)_g,w_{jk}}_\LtOA = (2T) \sum\limits_{l=1}^L \clg^{-1} w_{jk}(\aghl/\clg) \spr{\phi_l,w_{jk,lg}}_{\LtOl} \,,
	\end{equation*}
and thus the generalization \eqref{eq_frames_generalized} of condition \eqref{cond_frames} holds. Hence, Theorem~\ref{thm_main_II} is applicable, which yields the same results as the ones presented in \cite[Thm.~4.8]{Hubmer_Ramlau_2020}.

Similarly, the authors of \cite{Neubauer_Ramlau_2017} derived a singular-value-type decomposition of what they called the \emph{periodic} atmospheric tomography operator, which is defined by
	\begin{equation}\label{def_Tomo_At}
	\begin{split}
		\At \, : \, \LtOT^L &\to \LtOT^G \,,
		\\
		\phi &\mapsto \vphi_g = (\At\phi)_g(r)
		:=
		\sum\limits_{l=1}^{L}\phi_l( r+\aghl)  \,,
		\qquad g = 1,\dots,G \,,
	\end{split}
	\end{equation}
where $\OT := [-T,T]^2$ for some $T \geq 0$ sufficiently large and assuming that functions in $\LtOT$ are periodic. In particular, due to \cite[Proposition~4.1]{Neubauer_Ramlau_2017} there holds
	\begin{equation*}
		\spr{(\At \phi)_g,w_{jk}}_\LtOT 
		=
		\sum\limits_{l=1}^{L}  w_{jk}(\agx h_l,\agy h_l) \spr{\phi_l,w_{jk}}_\LtOl \,.
	\end{equation*} 
Since the sets $\{w_{jk}\}_{jk\in\Z}$ form orthonormal bases and thus tight frames over $\LtOT$ with frame bound $1$, it follows that Assumption~\ref{ass_main} is satisfied. Hence, Corollary~\ref{corollary_main_I} is applicable, and we recover the same decomposition and reconstruction results as in \cite{Neubauer_Ramlau_2017}.

The periodic atmospheric tomography operator as defined in \eqref{def_Tomo_At} only covers settings without LGSs. A similar operator, which can be used to treat settings with only LGSs was also considered in \cite{Hubmer_Ramlau_2020}, and the derived frame decomposition results again fit into the theoretical framework developed in this paper.

Lastly, the authors of \cite{Niebsch_Ramlau_2020} recently proposed an approach for using atmospheric tomography in Single Conjugate Adaptive Optics (SCAO), a specific AO setting using only a single guide star and thus not naturally allowing for atmospheric tomography. However, based on a time-series of wavefront measurements together with an estimate of the windspeed on each atmospheric layer, they developed a method for incorporating atmospheric tomography which lead to an improvement in imaging quality. Their approach uses the same operator $\At$ as in \eqref{def_Tomo_At}, but with the parameters $\aghl$ replaced by the corresponding windshift vectors. Since this does not entail any essential structural changes of the problem, the results of \cite{Neubauer_Ramlau_2017} and our theoretical results on the frame decomposition are applicable also in that case.

\section{Conclusion}\label{sect_conclusion}

In this paper, we considered the decomposition of bounded linear operators on Hilbert spaces in terms of functions forming frames. The resulting frame decomposition encodes information on the structure and ill-posedness of the problem and can be used as the basis for the design and implementation of efficient numerical solution methods. In contrast to the singular-value decomposition, the presented frame decomposition can be derived explicitly for a wide class of operators, in particular for those satisfying a certain stability condition. In order to show the usefulness of this approach, we considered different examples from computerized and atmospheric tomography.

\section{Support}

S. Hubmer and R. Ramlau were (partly) funded by the Austrian Science Fund (FWF): F6805-N36. The authors would like to thank Dr.\ Stefan Kindermann for valuable discussions on some theoretical questions which arose during the writing of this manuscript. 

\bibliographystyle{plain}
{\footnotesize
\bibliography{mybib}
}

\end{document}